\theoremstyle{remark}
\newtheorem*{remark}{\textbf{Remark}}
\title{On the space-time discretization of variational retarded potential
  boundary integral equations}
\author[1]{D. P\"olz}
\author[1]{M. Schanz}
\affil[1]{Institute of Applied Mechanics, Graz University of
  Technology, Technikerstraße 4/II, 8010 Graz, Austria,
  dominik.poelz@gmail.com, m.schanz@tugraz.at}
\date{}                     
\newtheorem{theorem}{Theorem}
\newtheorem{lemma}[theorem]{Lemma}
\newtheorem{definition}[theorem]{Definition}
\crefname{appendix}{}{}
\Crefname{appendix}{}{}
\newcommand{\ra}[1]{\renewcommand{\arraystretch}{#1}}
\DeclareMathOperator{\sgn}{sgn}
\DeclareMathOperator{\diam}{diam}
\DeclareMathOperator{\dist}{dist}
\DeclareMathOperator{\potSl}{S}
\DeclareMathOperator{\potDl}{D}
\DeclareMathOperator{\bioSl}{V}
\DeclareMathOperator{\bioDl}{K}
\DeclareMathOperator{\opId}{I}
\DeclareMathOperator{\opT}{T}
\DeclareMathOperator{\opN}{N}
\DeclareMathOperator{\opQ}{Q}
\DeclareMathOperator{\opA}{A}
\DeclareMathOperator{\sons}{successors}
\DeclareMathOperator{\leaves}{leaves}
\newcommand*{\TX}[1]{\mathbf{#1}}
\newcommand*{\eIp}[2]{\left\langle#1,#2\right\rangle}
\newcommand*{\gIp}[3]{\left(#1,#2\right)_{#3}}
\newcommandx*{\NN}[1][1={}]{\mathbb{N}^{#1}}
\newcommandx*{\RR}[1][1={}]{\mathbb{R}^{#1}}
\newcommandx*{\CC}[1][1={}]{\mathbb{C}^{#1}}
\newcommandx*{\ZZ}[1][1={}]{\mathbb{Z}^{#1}}
\newcommand*{\opTk}[1]{\opT_{#1}}
\newcommand*{\opNk}[1]{\opN_{#1}}
\newcommand*{\intOp}[3]{\int_{#1} #2 \mathrm{d}{#3}}
\newcommandx*{\nSphere}{\mathbb{S}}
\newcommandx*{\sDist}[1][1={}]{\dist_{#1}}
\newcommand*{\fSol}{\mathcal{G}}
\newcommandx*{\trDiri}[2][1={},2={}]{
\ifthenelse{\equal{#2}{}}{\gamma_{0}^{#1}}{\gamma_{0,#2}^{#1}}  
}
\newcommandx*{\trNeum}[2][1={},2={}]{
\ifthenelse{\equal{#2}{}}{\gamma_{1}^{#1}}{\gamma_{1,#2}^{#1}}
}
\newcommandx*{\spaceLp}[3][1={},2={},3={}]{
\ifthenelse{\equal{#2}{}}{L^{#1}_{#3}}{L^{#1}_{#3}(#2)}  
}
\newcommandx*{\spaceHs}[3][1={},2={},3={}]{
\ifthenelse{\equal{#2}{}}{H^{#1}_{#3}}{H^{#1}_{#3}(#2)}  
}
\newcommand*{\LTwo}[1]{\spaceLp[2][#1][]}
\newcommand*{\LTSig}{\LTwo{\Sigma}}
\newcommand*{\intext}{\mathfrak{d}}
\newcommandx*{\rvertPnl}[1][1={}]{\mskip-6mu\upharpoonright_{\sigma}%
  \ifthenelse{\equal{#1}{}}{}{\mskip-5mu\left(#1\right)}
}
\newcommand*{\IDX}{\mathfrak{I}}
\newcommand*{\BCT}{\mathbb{T}}
\newcommand*{\VTX}{\mathcal{V}}
\newcommand*{\EDG}{\mathcal{E}}
\begin{document}
	
\maketitle
\section*{Abstract}
  This paper discusses the practical development of space-time boundary element
  methods for the wave equation in three spatial dimensions. The employed trial
  spaces stem from simplex meshes of the lateral boundary of the space-time
  cylinder. This approach conforms genuinely to the distinguished structure of
  the solution operators of the wave equation, so-called retarded potentials.
  Since the numerical evaluation of the arising integrals is intricate, the bulk
  of this work is constituted by ideas about quadrature techniques for retarded
  layer potentials and associated energetic bilinear forms. Finally, we
  glimpse at algorithmic aspects regarding the efficient implementation of
  retarded potentials in the space-time setting. The proposed methods are
  verified by means of numerical experiments, which illustrate their capacity.

\textbf{Keywords}: wave equation; boundary element method; Bubnov-Galerkin; light cone

\section{Introduction}\label{sec:intro}

The philosophy of space-time methods is to consider space and time as components
of space-time rather than disconnected entities. Space-time finite elements are
based on meshes of the $d+1$-dimensional space-time domain, where $d\in\NN$
denotes the number of spatial dimensions. Especially over the course of the
last decade, space-time finite element methods have achieved remarkable progress
\cite{Neumueller2011,Steinbach2015,Gopalakrishnan2017a,Gopalakrishnan2019}.
Advantages of this methodology are the natural treatment of non-stationary
domains \cite{Wang2015,Langer2016}, adaptivity \cite{Doerfler2016,Poelz2019} and
efficient parallelization techniques \cite{Gander2016,Neumueller2013}.
In the context of hyperbolic problems, space-time approaches facilitate locally
explicit solution strategies exploiting causality and finite speed propagation
\cite{Gopalakrishnan2015,Perugia2020,Uengoer2002}.

While boundary integral equations (BIEs) have proven to be a compelling device
for exterior scattering problems or transparent boundary conditions
\cite{Abboud2011}, the development of genuine space-time boundary element
methods (BEMs) is in its infancy. Typical time domain BEMs are based on
semi-discretization. In particular, they employ trial functions which are the
product of separate functions in space and time
\cite{Costabel2017,Ha-Duong2003,Davies2004,Sauter2013,Gimperlein2018}. An
earlier attempt at relinquishing this product structure is due to Frangi
\cite{Frangi2000}, who exploits ``causal'' shape functions to discretize BIEs
of the wave equation for $d=2$. These functions can be interpreted as a
predecessor to trial functions defined on unstructured space-time meshes. By
giving up the usual product structure, however, one is confronted with more
complicated integrals. The evaluation of these integrals is a major obstacle,
stalling the practical development of space-time BEMs.

In the context of BIEs of parabolic problems, Tausch and collaborators
\cite{Manson2019,Tausch2019} are actively developing quadrature techniques for
these integrals. In \cite{Poelz2019a}, we proposed a tentative space-time BEM
for the wave equation for $d=3$. Integral formulations of the wave equation,
especially for odd $d\geq 3$, are of extraordinary structure, reverberating
through their name retarded potential boundary integral equations (RPBIEs). The
space-time methodology is particularly apt for treating the distinguished
nature of RPBIEs. Therefore, this paper is intended to advance our earlier work.

The novelty of this paper lies in the utilization of space-time boundary
elements to discretize variational formulations of RPBIEs. Although the
mathematical analysis of Galerkin methods for RPBIEs is yet incomplete
\cite{Joly2017}, they have already been applied successfully
\cite{Banz2016,Veit2016}. The integral operators acting on the surface density
$w:\Sigma\to\RR$ ($\Sigma$ is the space-time boundary) are of the form
\begin{equation*}
  \opTk{k} w:\TX{x}\mapsto\intOp{\mathcal{Q}(\TX{x})}{k(x,y)w(\TX{y})}%
  {S(\TX{y})}
  .
\end{equation*}
Here, $\TX{x},\TX{y}\in\RR[4]$ are points in space-time with spatial components
$x,y\in\RR[3]$ and $k:\RR[3]\times\RR[3]\to\RR$ is the integral kernel. The set
$\mathcal{Q}(\TX{x})$ is the intersection of $\Sigma$ and a quadratic
hypersurface, namely the backward light cone, which depends on $\TX{x}$.
Energetic bilinear forms with trial and test functions $w,v$ read
\cite{Aimi2009,Aimi2010}
\begin{equation*}
  (w,v)\mapsto
  \intOp{\Sigma}{\opTk{k} w(\TX{x})\partial_t v(\TX{x})}{S(\TX{x})}
  = \intOp{\Sigma}{\intOp{\mathcal{Q}(\TX{x})}{k(x,y)w(\TX{y})
      \partial_tv(\TX{y})}{S(\TX{y})}}{S(\TX{x})}
  .
\end{equation*}
From here on, we refer to these integrals as inner (integral operator
$\opTk{k}$) and outer (Galerkin testing). The perhaps most successful
quadrature techniques for BIEs of elliptic problems treat both integrals
together as one high-dimensional integral \cite{Erichsen1998,Sauter2011}.
While this approach has compelling advantages, the design of such
high-dimensional quadrature methods for hyperbolic problems is complicated due
to the nonlinear behavior of $\TX{x}\mapsto\mathcal{Q}(\TX{x})$. This is the
reason why typical quadrature schemes employed in classical
semi-discretizations of RPBIEs treat these integrals separately
\cite{Aimi2013,Gimperlein2018a}. The present paper stays in line with these
approaches in the sense that the inner and outer integral are treated
individually. On the one hand, an alternative to the quadrature scheme for the
inner integral we developed in \cite{Poelz2019a} is proposed. On the other
hand, a suitable formula for the outer integral and a tentative numerical
integration method are discussed.

The paper is organized as follows. In \cref{sec:rpbie}, we exhibit the model
initial-boundary value problem, two related RPBIEs, and their variational
formulations. \Cref{sec:disc} discusses space-time boundary
elements and quadrature techniques for RPBIEs. An algorithm which aims at the
efficient implementation of retarded potentials is presented in \cref{sec:algo}.
The purpose of \cref{sec:numex} is to verify the proposed schemes via numerical
experiments. \Cref{sec:conc} provides a brief conclusion of this work.

\section{Retarded potential boundary integral equations}%
\label{sec:rpbie}


Let $\RR[n],n\in\NN$ be equipped with the usual Euclidean inner product
$\eIp{\cdot}{\cdot}$ and induced norm $\norm{\cdot}$. The unit sphere is
denoted $\nSphere^{n-1}\coloneqq\{x\in\RR[n]:\norm{x}=1\}$ and we abbreviate
$\nSphere\coloneqq\nSphere^2$.
Consider a bounded open domain $\Omega^-\subset\RR[3]$ whose exterior is
denoted $\Omega^+\coloneqq\RR[3]\setminus\overline{\Omega^-}$. The Lipschitz
boundary $\Gamma\coloneqq\partial\Omega^-$ is equipped with the unit outward
normal vector field $\nu_\Gamma :\Gamma\to\nSphere$. Let $\intext\in\{+,-\}$ and
$\sDist[\Gamma]:\RR[3]\to\RR$ be the signed distance function of $\Gamma$
defined by $\sDist[\Gamma]:x\mapsto\intext \inf_{y\in\Gamma}\norm{x-y}$ for
$x\in\Omega^\intext$.
Throughout this work, time coordinates are defined as geometrized time, i.e,
the product of ordinary time and wave velocity, see
\cite[Section 2]{Poelz2019a}. Let $T>0$ be the simulation end
time and $Q^{\intext}\coloneqq (0,T)\times\Omega^{\intext}$ be the space-time
cylinder with lateral boundary $\Sigma\coloneqq (0,T)\times\Gamma$. To simplify
notation, we introduce the fixed decomposition of points in space-time
\begin{equation*}
  \RR[4]\ni\TX{x}\coloneqq (t,x) ,\quad \RR[4]\ni\TX{y}\coloneqq (\tau,y),
\end{equation*}
with times $t,\tau\in\RR$ and spatial components $x,y\in\RR[3]$. Since
$\Omega^\intext$ is stationary, $\sDist[\Sigma]:[0,T]\times\RR[3]\to\RR$ is the
time-invariant signed distance function of $\Sigma$ given by
$\sDist[\Sigma]:\TX{x}\mapsto\sDist[\Gamma](x)$. Moreover, the space-time
normal vector field $\nu_\Sigma:\Sigma\to\nSphere^3$ has vanishing time
component
\begin{equation*}
  \nu_\Sigma:\TX{x}\mapsto
  \begin{pmatrix}\nu_{\Sigma,t}(\TX{x})\\ \nu_{\Sigma,x}(\TX{x})\end{pmatrix}
  = \begin{pmatrix}0\\ \nu_\Gamma(x)\end{pmatrix}
  .
\end{equation*}
Let the (Lipschitz continuous) function $\phi_\Xi:\RR[4]\to\RR$ be defined by
\begin{equation}\label{eq:sgn_dist_cone}
  \phi_\Xi:\TX{x}\mapsto \norm{x}-t ,\quad
  \nabla \phi_\Xi:\TX{x}\mapsto \begin{pmatrix}
    \partial_t \phi_\Xi(\TX{x})\\ \nabla_x \phi_\Xi(\TX{x})\end{pmatrix}
  =  \begin{pmatrix} -1 \\ x/\norm{x}\end{pmatrix},
\end{equation}
where the gradient $\nabla$ is split into the time derivative $\partial_t$ and
the spatial gradient $\nabla_x$. The three-dimensional hypersurface 
$\Xi(\TX{x})\coloneqq\{\TX{y}\in\RR[4]:\phi_\Xi(\TX{x}-\TX{y})=0\}$ is the
backward light cone with apex at $\TX{x}$, see \cite[Fig. 1]{Poelz2019a}.

\subsection{Integral form of the wave equation}\label{sec:wave}

Let $\Box\coloneqq\partial_t^2-\Delta_x$ be the d'Alembertian and
$u:Q^\intext\to\RR$ be subject to the homogeneous wave equation
\begin{alignat}{4}
  \Box u &=0&\quad& \text{in}~ Q^\intext, \label{eq:pde}
  \\
  u = 0~\text{and}~\partial_t u &=0&\quad& \text{on}~ \{0\}\times\Omega^\intext.
  \label{eq:ic}
\end{alignat}
As a model problem consider Dirichlet boundary conditions with given datum
$g:\Sigma\to\RR$
\begin{equation}\label{eq:bc}
  \trDiri[\intext] u = g \quad \text{on}~\Sigma,
\end{equation}
where $\trDiri[\intext]$ denotes the trace operator, see \cite{McLean2000}.
The normal derivative of $u$ is denoted by $\trNeum[\intext] u$ and it holds
$\trNeum[\intext]:u\mapsto\eIp{\nu_\Gamma}{\trDiri[\intext]\nabla_x u}$ if $u$
is sufficiently smooth.
In this paper, we focus on boundary integral representations of the
solution of \cref{eq:pde,eq:ic,eq:bc}. The involved integral operators
employ the forward fundamental solution $\fSol$ of the d'Alembertian in three
spatial dimensions \cite[Operator 51]{Ortner1980a}
\begin{equation*}
  \fSol:\TX{x}\mapsto\frac{\delta_0(t-\norm{x})}{4\pi\norm{x}} =
  \frac{\delta_0\circ \phi_\Xi(\TX{x})}{4\pi\norm{x}},
\end{equation*}
where $\delta_0$ denotes the Dirac delta function. We tacitly exploit
$\delta_0(t)=\delta_0(-t)$ for any $t\in\RR$.

\begin{definition}\label{def:retpot}
  Let $\TX{x}\in Q^-\cup Q^+$. Define for sufficiently smooth $w:\Sigma\to\RR$
  the retarded single layer potential
  \begin{equation}\label{eq:slp}
    \potSl w(\TX{x})\coloneqq
    \intOp{\Sigma}{\fSol(\TX{x}-\TX{y}) w(\TX{y})}{S(\TX{y})} =
    \intOp{\Sigma}{k_1(x,y)w(\TX{y})\delta_0\circ \phi_\Xi(\TX{x-y})}{S(\TX{y})}
  \end{equation}
  and for sufficiently smooth $v:\Sigma\to\RR$ the retarded double layer
  potential
  \begin{equation}\label{eq:dlp}
    \potDl v(\TX{x}) \coloneqq
    \intOp{\Sigma}{\trNeum[][y] \fSol(\TX{x}-\TX{y})v(\TX{y})}{S(\TX{y})}
    =
    \intOp{\Sigma}{\left(k_3(x,y)v(\TX{y})+k_2(x,y)\partial_\tau v(\TX{y})%
      \right)\delta_0\circ \phi_\Xi(\TX{x-y})}{S(\TX{y})}
  \end{equation}
  with the kernel functions $k_i : \RR[3]\times\RR[3]\to\RR, i =1,2,3$
  \begin{equation}\label{eq:kfunc}
    k_1:(x,y)\mapsto\frac{1}{4\pi\norm{x-y}} ~,\quad
    k_2:(x,y)\mapsto \frac{\eIp{n_\Gamma(y)}{x-y}}{4\pi\norm{x-y}^2}~,\quad
    k_3:(x,y)\mapsto \frac{\eIp{n_\Gamma(y)}{x-y}}{4\pi\norm{x-y}^3}
    .
  \end{equation}
\end{definition}

In \cref{sec:rp_lc}, the formulas of \cref{def:retpot} are revisited, abolishing
the Dirac delta functions. It holds $\Box\potSl w=0$ and $\Box\potDl v=0$ in
$Q^-\cup Q^+$ for any (admissible) $w$ and $v$, respectively. Furthermore,
Kirchhoff's formula
$u=\intext\left(\potDl\trDiri[\intext]u -\potSl\trNeum[\intext]u\right)$
represents solutions of \cref{eq:pde,eq:ic} uniquely by their
Cauchy data $(\trDiri[\intext]u,\trNeum[\intext]u)$, see
\cite[Section 3.5]{Sayas2016}. Application of the trace induces the retarded
single layer and double layer boundary integral operators 
\begin{equation*}
  \trDiri[\intext] \potSl = \bioSl ,~\quad
  \trDiri[\intext] \potDl = \intext\tfrac{1}{2}\opId+\bioDl,
\end{equation*}
where the latter formula with the factor $1/2$ holds almost everywhere on
$\Sigma$. Note that the integral representations of $\bioSl$ and $\bioDl$ are
given by \cref{eq:slp,eq:dlp}, respectively (for $\TX{x}\in\Sigma$). In this
work, we examine two different approaches to solve \cref{eq:pde,eq:ic,eq:bc}
by means of BIEs. They are explained in \cref{tab:bie} to provide a concise
overview.

\begin{table}[ht]\centering
  \setlength\tabcolsep{18pt} 
  \ra{1.2} 
  \begin{tabular}{@{}ll@{\hskip 3mm}lll@{}}\toprule
    method & \multicolumn{2}{l}{unknown surface density} & BIE to be solved
    & solution of \cref{eq:pde,eq:ic,eq:bc}
    \\ \midrule
    indirect & $w$ & $\dots$ proxy density & $\bioSl w = g$ & $u = \potSl w$
    \\
    direct & $\trNeum[\intext] u$ & $\dots$ Neumann trace &
    $\bioSl \trNeum[\intext] u = -\intext\tfrac{1}{2}g + \bioDl g$ &
    $u = \intext\left(\potDl g -\potSl\trNeum[\intext]u\right)$
    \\ \bottomrule
  \end{tabular}
  \caption{Two approaches to solve \cref{eq:pde,eq:ic,eq:bc} via BIEs. The
    indirect method is based on the ansatz $u=\potSl w$, whose trace
    yields the BIE $\trDiri[\intext]\potSl w=g$. In contrast, the direct
    approach employs Kirchhoff's formula
    $u=\intext\left(\potDl g-\potSl\trNeum[\intext]u\right)$, describing the
    wave field in terms of its Cauchy data. The yet unknown Neumann trace
    $\trNeum[\intext] u$ is determined via the trace of Kirchhoff's formula.}
  \label{tab:bie}
\end{table}

In order to enable space-time Galerkin discretizations, suitable variational
formulations of the RPBIEs in \cref{tab:bie} are required. Finding compelling
space-time bilinear forms of RPBIEs has been the goal of multiple research
efforts \cite{Bamberger1986,Aimi2009,Joly2017}. However, computable bilinear
forms proven to be coercive in the same (Sobolev space) norm in which they are
bounded are elusive, see \cite[Theorems 3.1 and 3.3]{Aimi2009},
\cite[Corollary 4.6]{Joly2017}, and \cite[Theorem 3]{Costabel2017}. Therefore,
we resort to well-established bilinear forms which are of manageable complexity
(nevertheless nontrivial) and supported by experience. The chosen setting is as
in \cite[Theorem 3]{Costabel2017} with a weight of $\omega_I=0$, which
corresponds to \cite[Equations (29)--(31)]{Aimi2009}.
Let $\mathcal{H}_0$, $\mathcal{H}_1$ be appropriate Sobolev spaces and
$b_{\bioSl}:\mathcal{H}_0\times\mathcal{H}_0\to\RR$, 
$b_{\bioDl}:\mathcal{H}_1\times\mathcal{H}_0\to\RR$ be bilinear forms defined by
\begin{equation*}
  b_{\bioSl}: (w,v) \mapsto
  \intOp{\Sigma}{\bioSl w(\TX{x})\partial_t v(\TX{x})}{S(\TX{x})}
  ,\quad
  b_{\bioDl}: (w,v) \mapsto
  \intOp{\Sigma}{\bioDl w(\TX{x})\partial_t v(\TX{x})}{S(\TX{x})}
  .
\end{equation*}
The sole purpose of the abstract spaces $\mathcal{H}_0$ and $\mathcal{H}_1$ is
to distinguish qualitative properties of the input densities of $b_{\bioSl}$ and
$b_{\bioDl}$ in the discretization process of \cref{sec:disc}. For given
Dirichlet datum $g$ the functional $b_{\opId}(g,\cdot):\mathcal{H}_0\to\RR$ is
defined by $w\mapsto\intOp{\Sigma}{g \partial_t w\,}{S}$, where the subscript
$\opId$ denotes the identity map. A variational formulation of the indirect
approach in \cref{tab:bie} is:
\begin{equation}\label{eq:vf_ind}
  \text{Given}~g\in\mathcal{H}_1,~\text{find}~w\in\mathcal{H}_0 :~
  b_{\bioSl}(w,v) = b_{\opId}(g,v) \quad \forall v\in\mathcal{H}_0.
\end{equation}
For the direct method in \cref{tab:bie} we use the formulation:
\begin{equation}\label{eq:vf_dir}
  \text{Given}~g\in\mathcal{H}_1,
  ~\text{find}~\trNeum[\intext] u\in\mathcal{H}_0 :~
  b_{\bioSl}(\trNeum[\intext] u,v) =
  -\intext\tfrac{1}{2}b_{\opId}(g,v) + b_{\bioDl}(g,v)
  \quad\forall v\in\mathcal{H}_0
  .
\end{equation}
As shown in \cite[Propositions 3.4 and 3.7]{Joly2017}, a bilinear form similar
to $b_{\bioSl}$ is positive definite iff $T$ is sufficiently small, however, its
induced norm is not equivalent to the $\spaceHs[s][\Sigma]$-norm for any
$s\in\RR$ \cite[Theorem 3.1]{Aimi2009}. Still, promising numerical evidence is
reported in \cite{Ha-Duong2003,Aimi2012}.

\subsection{Retarded layer potential integrals from the light cone's
  perspective}
\label{sec:rp_lc}

In this segment, we recast the integral operators of \cref{def:retpot} to
a natural representation in the space-time context. In \cite{Poelz2019a}, we
employ local parametrizations of the space-time boundary $\Sigma$ to derive a
suitable formula for retarded potentials. In this work, we seek a
representation in terms of the light cone $\Xi$ instead.
\begin{theorem}[Coarea formula]\label{thm:coarea}
  Let $m,n\in\NN$ with $m>n$, $f:\RR[m]\to\RR[n]$ be Lipschitz continuous,
  $J_f(x)$ be its $n$-dimensional Jacobian at $x\in\RR[m]$, and
  $g:\RR[m]\to\RR$ be integrable. It holds
  \begin{equation*}
    \intOp{\RR[m]}{g(x)J_f(x)}{x} =
    \intOp{\RR[n]}{\intOp{f^{-1}\{y\}}{g(x)}{S(x)}}{y}
    .
  \end{equation*}
\end{theorem}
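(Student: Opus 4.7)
The plan is to follow Federer's classical route and establish the identity in three stages of increasing generality. First I would reduce to the case $g\geq 0$ by splitting $g=g^+-g^-$, and then by monotone convergence reduce further to characteristic functions $g=\chi_E$ of bounded measurable sets $E\subset\RR[m]$. After this reduction the claim is equivalent to the measure identity $\intOp{E}{J_f(x)}{x}=\intOp{\RR[n]}{\mathcal{H}^{m-n}(E\cap f^{-1}\{y\})}{y}$, where $\mathcal{H}^{m-n}$ denotes the $(m-n)$-dimensional Hausdorff measure; the surface integrals in the original statement are just this Hausdorff measure on the fibers.

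Second, I would establish the formula for a linear surjection $f=L$. Using the singular value decomposition $L=U\Sigma V^T$ and the fact that orthogonal transformations preserve Lebesgue and Hausdorff measures, one may assume $L$ is a diagonal scaling composed with the projection onto the first $n$ coordinates. Splitting $\RR[m]=\RR[n]\times\RR[m-n]$ and invoking Fubini's theorem then yields the identity directly, with $J_L=\sqrt{\det(LL^T)}$ appearing as the determinant of the scaling. If $L$ fails to be surjective then $J_L\equiv 0$ and the fibers over the range of $L$ have dimension strictly greater than $m-n$, so both sides of the identity vanish.

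Third, I would pass from linear to Lipschitz $f$ via Rademacher's theorem, which provides the approximate derivative $Df(x)$ almost everywhere. Fix $\epsilon>0$ and decompose $\RR[m]$ into countably many measurable pieces $E_k$ on each of which $f$ is $(1+\epsilon)$-close (in the bi-Lipschitz sense) to its affine linearization at a chosen base point; such a Whitney-type decomposition is standard. On each $E_k$ the linear case combined with comparison estimates for Lipschitz maps yields two-sided bounds between $\intOp{E_k}{J_f(x)}{x}$ and $\intOp{\RR[n]}{\mathcal{H}^{m-n}(E_k\cap f^{-1}\{y\})}{y}$ with multiplicative error controlled by a function of $\epsilon$ alone. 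Summing over $k$ and sending $\epsilon\to 0$ delivers the formula for Lipschitz $f$, and the reductions of the first step carry it back to the stated generality.

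The main obstacle will be the third step: one must argue that the approximate affine surrogates, which by Rademacher's theorem are only defined almost everywhere, nevertheless capture the full measure on both sides of the identity. The geometric heart is to show that $f^{-1}\{y\}$ is locally $(m-n)$-rectifiable for $\mathcal{L}^n$-almost every $y\in\RR[n]$, and that the $J_f$-weighted volume of the set where $f$ fails to admit an approximate differential is null. Once these geometric measure theory ingredients are in place, the summation and limiting arguments are routine, and the remainder reduces to bookkeeping of the error terms.
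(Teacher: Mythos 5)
The paper offers no proof of this statement at all --- it is a classical result quoted verbatim, with the proof delegated to \cite[Theorem 3.2.12]{Federer1996} --- and your outline is precisely the standard Federer/Evans--Gariepy argument that this citation points to (reduction to characteristic functions, the linear case via SVD and Fubini, then Rademacher plus a Lipschitz linearization decomposition). Note only that your third step defers exactly the content that constitutes the real work in Federer's proof (almost-everywhere rectifiability of the fibers $f^{-1}\{y\}$ and nullity of the set where the approximate differential fails), so as written it is a correct roadmap consistent with the cited source rather than a self-contained proof.
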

A proof of the coarea formula can be found in
\cite[Theorem 3.2.12]{Federer1996}. Assuming that $f$ and $g$ are as in
\cref{thm:coarea} with the addition that the function $\RR[m]\to\RR$,
$x\mapsto g(x)/J_f(x)$ is integrable, we may write
\begin{equation*}
  \intOp{\RR[m]}{g(x)\delta_0\circ f(x)}{x} =
  \intOp{\RR[n]}{\intOp{f^{-1}\{y\}}{\frac{g(x)\delta_0\circ f(x)}%
      {J_f(x)}}{S(x)}}{y} =
  \intOp{\RR[n]}{\delta_0(y)\intOp{f^{-1}\{y\}}{\frac{g(x)}{J_f(x)}%
    }{S(x)}}{y}
  .
\end{equation*}
This, in combination with the sifting property of $\delta_0$, leads to
\begin{equation}\label{eq:co_del}
  \intOp{\RR[m]}{g(x)\delta_0\circ f(x)}{x} =
  \intOp{f^{-1}\{0\}}{\frac{g(x)}{J_f(x)}}{S(x)},
\end{equation}
if the integral on the right hand side exists. A formula similar to
\cref{eq:co_del} for $n=1$ can be found in \cite[Theorem 6.1.5]{Hoermander2003}.
Let $f\coloneqq\sDist[\Sigma]$ and $g:\RR[4]\to\RR$ be such that
$\trDiri g\coloneqq\trDiri[-]g=\trDiri[+]g$ holds. In this case,
\cref{eq:co_del} yields
\begin{equation}\label{eq:co_sigma}
  \intOp{\RR[4]}{g(\TX{x})\delta_0\circ\sDist[\Sigma](\TX{x})}{\TX{x}} =
  \intOp{\sDist[\Sigma]^{-1}\{0\}}{
    \frac{\trDiri g(\TX{x})}{\norm{\trDiri\nabla\sDist[\Sigma](\TX{x})}}}%
  {S(\TX{x})} = \intOp{\Sigma}{\trDiri g(\TX{x})}{S(\TX{x})},
\end{equation}
where we used $\trDiri\nabla\sDist[\Sigma]=\nu_\Sigma$. We turn our attention
to the operators in \cref{def:retpot} and introduce an operator that unifies
the integral formulas of $\potSl$, $\potDl$, $\bioSl$, and $\bioDl$. Let
$\TX{x}\in\RR[4]$ and $k:\RR[3]\times\RR[3]\to\RR$ be a kernel function as in
\cref{eq:kfunc}. For sufficiently smooth $f:\RR[4]\to\RR$ with bounded support
we define the retarded Newtonian potential $\opNk{k}$ by
\begin{equation}\label{eq:tk_vol_del}
  \opNk{k} f(\TX{x}) \coloneqq
  \intOp{\RR[4]}{k(x,y)f(\TX{y}) \delta_0\circ\phi_\Xi(\TX{x}-\TX{y})}{\TX{y}}
  .
\end{equation}
Applying \cref{eq:co_del} and \cref{eq:sgn_dist_cone} to \cref{eq:tk_vol_del}
yields
\begin{equation}\label{eq:tk_vol}
  \opNk{k} f(\TX{x}) = \intOp{\TX{y}\in\RR[4]:\,\phi_\Xi(\TX{x}-\TX{y})=0}%
  {\frac{k(x,y)f(\TX{y})}{\norm{\nabla_{\TX{y}}\phi_\Xi(\TX{x}-\TX{y})}}}%
  {S(\TX{y})}=\frac{1}{\sqrt{2}}\intOp{\Xi(\TX{x})}{k(x,y)f(\TX{y})}{S(\TX{y})}
  .
\end{equation}
For sufficiently smooth $w:\Sigma\to\RR$ we define analogously the retarded
layer potential $\opTk{k}$ by
\begin{equation}\label{eq:tk_surf_del}
  \opTk{k} w(\TX{x}) \coloneqq
  \intOp{\Sigma}{k(x,y)w(\TX{y})\delta_0\circ\phi_\Xi(\TX{x}-\TX{y})}{S(\TX{y})}
  ,
\end{equation}
which models the operators in \cref{def:retpot} via $\potSl=\opTk{k_1}$ and
$\potDl=\opTk{k_3}+\opTk{k_2}\partial_t$.
For given $w:\Sigma\to\RR$ consider an extension $\widetilde{w}:\RR[4]\to\RR$
such that $w=\trDiri[-]\widetilde{w}=\trDiri[+]\widetilde{w}$ holds. Insertion
of $f\coloneqq \widetilde{w}\delta_0\circ\sDist[\Sigma]$ in \cref{eq:tk_vol_del}
in conjunction with \cref{eq:co_sigma} yields the identity
$\opNk{k}\left(\widetilde{w}\delta_0\circ\sDist[\Sigma]\right)=\opTk{k} w$,
where $\opTk{k} w$ is as in \cref{eq:tk_surf_del}. Application of
\cref{eq:tk_vol} leads to the desired representation
\begin{equation}\label{eq:tk_surf}
  \opTk{k} w(\TX{x}) =
  \opNk{k}\left(\widetilde{w}\delta_0\circ\sDist[\Sigma]\right)
  (\TX{x}) = \frac{1}{\sqrt{2}}\intOp{\Xi(\TX{x})}{k(x,y)\widetilde{w}(\TX{y})%
    \delta_0\circ\sDist[\Sigma](\TX{y})}{S(\TX{y})}.
\end{equation}
By recasting the potentials of \cref{def:retpot} to the form \cref{eq:tk_surf}
we have yet traded the Dirac delta on $\Xi(\TX{x})$ for a Dirac delta on
$\Sigma$. We incorporate a parametrization of $\Xi(\TX{x})$ to obtain a
computationally sensible formula. In the following, $SO(3)$ denotes the special
orthogonal matrix group in three spatial dimensions.

\begin{definition}\label{def:lc_para}
  Define the parameter domain $\mathcal{P}\coloneqq[0,\infty)\times[0,2\pi)
  \times[0,\pi]$ and parameters
  $\zeta\coloneqq(\rho,\varphi,\theta)\in\mathcal{P}$.
  For given $\TX{x}\in\RR[4]$, $r_0> 0$, and $R\in SO(3)$ define
  $\psi_{\TX{x}}:\mathcal{P}\to\RR[4]$ by
  \begin{equation*}
    \psi_{\TX{x}} : \zeta\mapsto \TX{x}-r_0\rho
    \begin{pmatrix} 1\\ R e_{\nSphere}(\varphi,\theta) \end{pmatrix}
    ,
  \end{equation*}
  where $e_{\nSphere}:[0,2\pi)\times[0,\pi]\to\nSphere$ is defined by
  $e_{\nSphere} : (\varphi,\theta) \mapsto
  \begin{pmatrix} \cos\varphi\sin\theta& \sin\varphi\sin\theta&
    \cos\theta\end{pmatrix}^\top$.
\end{definition}
Note that instead of using $e_{\nSphere}$ as stated in \cref{def:lc_para}, any
smooth parametrization of $\nSphere$ would suffice for our purposes, cf.
\cite[Definition 2.20]{Poelz2021}. For instance, the domain of the azimuthal
angle $\varphi$ could be defined as $[\alpha,\alpha+2\pi)$ for any
$\alpha\in\RR$.
The map $\psi_{\TX{x}}:\mathcal{P}\to\Xi(\TX{x})$ is surjective, its restriction
to the dense subset $(0,\infty)\times[0,2\pi)\times(0,\pi)\subset\mathcal{P}$
is injective, and its Jacobian reads
$J_{\psi_{\TX{x}}}:\zeta\mapsto\sqrt{2}r_0^3\rho^2\sin\theta$,
see \cite[Lemma 2.21, Proposition 3.15]{Poelz2021}. Convenient choices for
$r_0$ and $R$ in \cref{def:lc_para} are provided in \cref{def:choice_r}.
The parametrization $\psi_{\TX{x}}:\mathcal{P}\to\Xi(\TX{x})$ can be
used to transform the integral along $\Xi(\TX{x})$ in \cref{eq:tk_surf}
\begin{equation}\label{eq:int_para_lc}
  \intOp{\Xi(\TX{x})}{k(x,y)\widetilde{w}(\TX{y})
    \delta_0\circ\sDist[\Sigma](\TX{y})}{\TX{y}} =
  \intOp{\mathcal{P}}{k(x,\cdot)\circ\lvert_{y}\psi_{\TX{x}}(\zeta)
    \widetilde{w}\circ\psi_{\TX{x}}(\zeta)
    \delta_0\circ\sDist[\Sigma]\circ\psi_{\TX{x}}(\zeta)
    J_{\psi_{\TX{x}}}(\zeta)}{\zeta},
\end{equation}
where $\lvert_{y}\psi_{\TX{x}}(\zeta)$ is the spatial component of
$\TX{y}\coloneqq\psi_{\TX{x}}(\zeta)$. Application of \cref{eq:co_del} to
\cref{eq:int_para_lc} leads with \cref{eq:tk_surf} to
\begin{equation}\label{eq:tk_para}
  \opTk{k} w(\TX{x}) =
  \intOp{\psi_{\TX{x}}^{-1}(\Xi(\TX{x})\cap\Sigma)}%
  {k(x,\cdot)\circ\lvert_y\psi_{\TX{x}}(\zeta)%
    w\circ\psi_{\TX{x}}(\zeta)\frac{J_{\psi_{\TX{x}}}(\zeta)}%
    {\sqrt{2}\norm{\nabla\left(\sDist[\Sigma]\circ\psi_{\TX{x}}(\zeta)\right)}}}%
  {S(\zeta)},
\end{equation}
where $\psi_{\TX{x}}^{-1}(\Xi(\TX{x})\cap\Sigma) =
\{\zeta\in\mathcal{P}:\sDist[\Sigma]\circ\psi_{\TX{x}}(\zeta)=0\}$ is the subset
of $\Sigma$ lit by $\Xi(\TX{x})$ in parameter coordinates.
In other words, \cref{eq:tk_para} shows that retarded layer potentials
integrate along the intersection of $\Xi(\TX{x})$ and $\Sigma$. While
\cref{eq:tk_para} is based on a parametrization of $\Xi(\TX{x})$, we derive
in \cite[Equation (3.4b)]{Poelz2019a} an alternative representation of
$\opTk{k}$ based on piecewise parametrizations of $\Sigma$. Both integral
representations of $\opTk{k}$, \cref{eq:tk_para} and
\cite[Equation (3.4b)]{Poelz2019a}, are valid for $C^1_{\text{pw}}$-hypersurfaces
$\Sigma$ in the sense of \cite[Definition 2.2.10]{Sauter2011}. The expression
for $\opTk{k}$ in \cref{eq:tk_para} is specialized to piecewise flat boundary
decompositions in \cref{sec:q_inner}.

\section{Space-time discretization and numerical evaluation of
  retarded layer potentials}\label{sec:disc}

As already indicated, the novelty of the proposed method lies in the
utilization of space-time boundary element spaces as in \cite{Poelz2019a}. The
space-time boundary $\Sigma$ is represented by
$\Sigma_N\coloneqq\{\sigma_i\}_{i=1}^N$, a mesh composed of $N\in\NN$ open
nonoverlapping tetrahedrons $\sigma$. We refer to the subsets
$\sigma\subset\Sigma$ as panels (not elements), see
\cite[Section 1.2]{Sauter2011} and \cite[Section 2.3]{Ciarlet2002}.
The mesh size is denoted $h\coloneqq\max_{\sigma\in\Sigma_N}\diam\sigma$.
Simplex space-time meshes are constructed via the algorithm outlined in
\cite{Karabelas2015} and we resort to lowest order trial spaces.
\begin{definition}\label{def:st_be}
  Let $\mathbb{P}_n(\sigma)$ be the space of polynomials of order up to
  $n\in\NN_0\coloneqq\NN\cup\{0\}$ in the tetrahedron $\sigma$. Define the
  (discontinuous) space of indicator functions and the space of (continuous)
  hat functions by
  \begin{equation*}
    S_h^0(\Sigma_N)\coloneqq\prod\nolimits_{\sigma\in\Sigma_N}
    \mathbb{P}_0(\sigma) ,\quad
    S_h^1(\Sigma_N)\coloneqq\prod\nolimits_{\sigma\in\Sigma_N}
    \mathbb{P}_1(\sigma)\cap C(\Sigma)
    .
  \end{equation*}
  The subspace of $S_h^1(\Sigma_N)$ with homogeneous initial conditions is
  defined by $V_h^1(\Sigma_N)\coloneqq
  \{v\in S_h^1(\Sigma_N):v\rvert_{\{0\}\times\Gamma}=0\}$. It holds
  $\dim S_h^0(\Sigma_N) = N$ and $\dim S_h^1(\Sigma_N)$ equals the number of
  vertices in $\Sigma_N$.
\end{definition}
The spaces of \cref{def:st_be} are labeled space-time boundary element spaces
because there is no inherent distinction between space and time variables. The
space $S_h^0(\Sigma_N)$ is intended for discretization of $\mathcal{H}_0$, while
$V_h^1(\Sigma_N)$ is used to approximate functions in $\mathcal{H}_1$.
Consequently, the discretized version of \cref{eq:vf_ind} reads:
\begin{equation}\label{eq:df_ind}
  \text{Given}~g\in\mathcal{H}_1,~\text{find}~w_h\in S_h^0(\Sigma_N) :~
  b_{\bioSl}(w_h,v_h) =  b_{\opId}(g,v_h) \quad \forall v_h\in S_h^0(\Sigma_N)
  .
\end{equation}
In \cref{eq:vf_dir} the integral operator $\bioDl$ acts on the given Dirichlet
data $g$. In such cases it is common practice in BEMs for elliptic problems to
approximate the data, see, e.g., \cite[Chapter 12]{Steinbach2008}. To this end,
we employ the $\LTSig$-orthogonal projection
$\opQ_h^1 :\LTSig\to V_h^1(\Sigma_N)$ with the usual inner product
$\gIp{\cdot}{\cdot}{\LTSig}: (w,v)\mapsto\intOp{\Sigma}{w v\,}{S}$. The
projection $\opQ_h^1 g\in V_h^1(\Sigma_N)$ of $g\in\LTSig$ is the unique
solution of
\begin{equation*}
  \gIp{\opQ_h^1 g}{v_h}{\LTSig} = \gIp{g}{v_h}{\LTSig}
  \quad \forall v_h\in V_h^1(\Sigma_N)
  .
\end{equation*}
Assuming $g\in\LTSig$ holds, the discretization of \cref{eq:vf_dir} with
$w_h\approx\trNeum[\intext] u$ reads:
\begin{equation}\label{eq:df_dir}
  \text{Given}~g\in\LTSig,~\text{find}~w_h\in S_h^0(\Sigma_N) :~
  b_{\bioSl}(w_h,v_h) =
  -\intext\tfrac{1}{2} b_{\opId}(\opQ_h^1 g,v_h) + b_{\bioDl}(\opQ_h^1 g,v_h)
  \quad \forall v_h\in S_h^0(\Sigma_N)
  .
\end{equation}
The following sections are concerned with the numerical evaluation of the
involved operators.

\subsection{A quadrature method for the ``inner integral''}%
\label{sec:q_inner}

In this section, we devise a numerical integration scheme for
\cref{eq:tk_para} tailored to tetrahedral panels.

\begin{definition}[Inner integral \cref{eq:tk_para}]\label{def:int_inner}
  Let $\TX{x}\in\RR[4]$ be arbitrary but fixed and $\sigma$ be a tetrahedron
  embedded in $\RR[4]$ with normal vector $\nu\in\nSphere^3$. The unit outward
  conormal vectors of the four triangular faces of $\sigma$ are denoted
  $\nu_i\in\nSphere^3,i=1,\dots,4$ and satisfy $\eIp{\nu}{\nu_i}=0$. Let
  $k:\RR[3]\times\RR[3]\to\RR$ be as in \cref{eq:kfunc}, $w:\sigma\to\RR$ be
  analytic, and $\psi_{\TX{x}}:\mathcal{P}\to\Xi(\TX{x})$ be as in
  \cref{def:lc_para}. Define the integral kernel $k_\psi:\mathcal{P}\to\RR$ and
  the integral $\mathcal{I}$ by
  \begin{equation*}
    k_\psi:\zeta\mapsto\frac{k(x,\cdot)\circ\lvert_y\psi_{\TX{x}}(\zeta)%
      J_{\psi_{\TX{x}}}(\zeta)}%
    {\sqrt{2}\norm{\nabla \left(\sDist[\Sigma]\circ\psi_{\TX{x}}(\zeta)\right)}}
    ,\quad
    \mathcal{I} \coloneqq \intOp{\psi_{\TX{x}}^{-1}(\Xi(\TX{x})\cap\sigma)}%
    {k_\psi(\zeta) w\circ\psi_{\TX{x}}(\zeta)}{S(\zeta)}.
  \end{equation*}
\end{definition}

The notation introduced in \cref{def:int_inner} is employed throughout the
remainder of this section.
We denote the tangent hyperplane of the panel $\mathcal{T}_\sigma\coloneqq
\{\TX{y}\in\RR[4]:\eIp{\TX{y}-\TX{x}_\sigma}{\nu}=0\}$ for some
$\TX{x}_\sigma\in\overline{\sigma}$. Each triangular face of $\sigma$
induces a half-space $\{\TX{y}\in\RR[4]:\eIp{\TX{y}-\TX{x}_i}{\nu_i}<0\}$,
where $\TX{x}_i,i=1,\dots,4$ is a vertex in that face. The panel is the
intersection of these half-spaces and $\mathcal{T}_\sigma$
\begin{equation}\label{eq:pnl_ilf}
  \sigma = \left\{\TX{y}\in\mathcal{T}_\sigma :
    \eIp{\TX{y}-\TX{x}_i}{\nu_i} < 0 ~~\forall i=1,\dots,4
  \right\}
  .
\end{equation}
The condition $\psi_{\TX{x}}(\zeta)\in\mathcal{T}_\sigma$ with $\psi_{\TX{x}}$
as in \cref{def:lc_para} is equivalent to
\begin{align}
  0 &=\eIp{\TX{x}-r_0\rho \begin{pmatrix} 1\\
      R e_{\nSphere}(\varphi,\theta) \end{pmatrix}-\TX{x}_\sigma}{\nu}
  = \eIp{\TX{x}-\TX{x}_\sigma}{\nu}-r_0\rho
  \eIp{\begin{pmatrix} 1\\ e_{\nSphere}(\varphi,\theta)\end{pmatrix}}%
  {\begin{pmatrix} \nu_t\\ R^\top \nu_x\end{pmatrix}}
  \nonumber \\ &= \eIp{\TX{x}-\TX{x}_\sigma}{\nu}
  -r_0\rho \eIp{e_{\nSphere}(\varphi,\theta)}{R^\top \nu_x},
  \label{eq:psi_xi_in_hypl}
\end{align}
where the spatial and time components of $\nu$ are denoted by $\nu_x\in\RR[3]$
and $\nu_t=0$, respectively.
\begin{definition}\label{def:choice_r}
  For given normal vector $\nu_x\in\nSphere$ let $R\in SO(3)$ be such that
  $R^\top\nu_x=\begin{pmatrix}0&0&1\end{pmatrix}^\top$ holds. Furthermore,
  for given apex $\TX{x}\in\RR[4]$ and panel $\sigma\in\Sigma_N$ let
  $r_0\coloneqq \sup_{\TX{y}\in\sigma}(t-\tau)$ be the largest time separation
  between $\TX{x}$ and $\sigma$.
\end{definition}
Note that $\sup_{\TX{y}\in\sigma}(t-\tau)\leq 0$ implies
$\Xi(\TX{x})\cap\sigma\in\left\{\varnothing,\{\TX{x}\}\right\}$ and, therefore,
$\mathcal{I}=0$. As a consequence, the potential conflict between the
assumption $r_0>0$ in \cref{def:lc_para} and $r_0$ as in \cref{def:choice_r} is
of no practical significance. An explicit formula for $R$ in \cref{def:choice_r}
is provided in \cite[Remark 3.12]{Poelz2021}. Inserting
$e_{\nSphere}$ from \cref{def:lc_para} and $R$ from \cref{def:choice_r} into
\cref{eq:psi_xi_in_hypl} leads to
\begin{equation}\label{eq:para_hypl}
  \psi_{\TX{x}}(\zeta)\in\mathcal{T}_\sigma \quad \Leftrightarrow \quad
  \rho_0-\rho\cos\theta=0,
\end{equation}
with $\rho_0\coloneqq \eIp{\TX{x}-\TX{x}_\sigma}{\nu}/r_0$.
The partial derivatives of the level set function in \cref{eq:para_hypl}
$\phi:(\rho,\theta)\mapsto \rho_0-\rho\cos\theta$ are
$\partial_\rho\phi:(\rho,\theta)\mapsto-\cos\theta$ and
$\partial_\theta\phi:(\rho,\theta)\mapsto\rho\sin\theta$. We
restrict these derivatives to the solution of \cref{eq:para_hypl}, namely
$\rho_0 = \rho\cos\theta$, and obtain the maps $\rho\mapsto-\rho_0/\rho$ as
well as $\rho\mapsto\sqrt{\rho^2-\rho_0^2}$. While the magnitude of the first
derivative is monotonically decreasing, the latter is increasing. This shows
that for sufficiently small $\rho$ the $\rho$-direction is suitable for
parametrizing the solution of \cref{eq:para_hypl}, while for large $\rho$ the
$\theta$-direction becomes the better choice, see \cref{fig:rho_the_r0}. The
point where the partial derivatives are of equal magnitude is given by
\begin{equation*}
  %
  \rho_{\operatorname{eq}}=\frac{1}{\sqrt{2}} \left(
    \rho_0^2+\left(\rho_0^4+4\rho_0^2\right)^{1/2}\right)^{1/2}
  ,\quad
  \theta_{\operatorname{eq}}=\arccos\left(\rho_0/\rho_{\operatorname{eq}}\right)
  .
\end{equation*}
Define the two domains
\begin{equation*}
  \mathcal{D}_1\coloneqq \begin{cases}
    \left[0,\theta_{\operatorname{eq}}\right) \times [0,2\pi)
    &\text{if}~\rho_0 > 0,
    \\
    \left(\theta_{\operatorname{eq}},\pi\right] \times [0,2\pi)
    &\text{if}~\rho_0 < 0,
    \\
    \varnothing &\text{if}~\rho_0 = 0,
  \end{cases}
  \quad \mathcal{D}_2\coloneqq
  \left[\rho_{\operatorname{eq}},\infty\right)\times [0,2\pi),
\end{equation*}
and the parametrizations $\ell_i:\mathcal{D}_i\to\mathcal{P},i=1,2$ by
\begin{equation*}
  \ell_1:\begin{pmatrix}\theta\\ \varphi\end{pmatrix}\mapsto
  \begin{pmatrix}\rho_0/\cos\theta\\ \varphi\\ \theta\end{pmatrix}
  ,\quad \ell_2:\begin{pmatrix}\rho\\ \varphi\end{pmatrix}\mapsto
  \begin{pmatrix}\rho\\ \varphi\\ \arccos\left(\rho_0/\rho\right)\end{pmatrix}.
\end{equation*}
We have $\ell_1(\mathcal{D}_1)\cap\ell_2(\mathcal{D}_2)=\varnothing$ and
$\ell_1(\mathcal{D}_1)\cup\ell_2(\mathcal{D}_2)=
\psi_{\TX{x}}^{-1}(\Xi(\TX{x})\cap\mathcal{T}_\sigma)$ due to the careful
construction of these parametrizations, see \cref{fig:rho_the_li}. From
\cref{eq:pnl_ilf} we conclude $\psi_{\TX{x}}^{-1}(\Xi(\TX{x})\cap\sigma)=
\{\zeta\in\ell_1(\mathcal{D}_1)\cup\ell_2(\mathcal{D}_2):
\phi_\sigma\circ\psi_{\TX{x}}(\zeta)<0\}$, where $\phi_\sigma:\RR[4]\to\RR$ is
defined by
\begin{equation}\label{eq:dist_faces}
  \phi_\sigma: \TX{y}\mapsto \max_{i\in\{1,\dots,4\}}
  \eIp{\TX{y}-\TX{x}_i}{\nu_i}
  .
\end{equation}
This facilitates a parametrization of the integral in \cref{def:int_inner} via
$\ell_1$ and $\ell_2$
\begin{equation}\label{eq:int_inner_param}
  \mathcal{I}=\sum\nolimits_{i\in\{1,2\}}
  \intOp{\eta\in\mathcal{D}_i:\phi_\sigma\circ\psi_{\TX{x}}\circ\ell_i(\eta)<0}%
  {k_\psi\circ\ell_i(\eta)
    w\circ\psi_{\TX{x}}\circ\ell_i(\eta)J_{\ell_i}(\eta)}{\eta}
  ,
\end{equation}
which involves integrals in the implicitly defined subsets
$\mathcal{B}_i\coloneqq\{\eta\in\mathcal{D}_i:
\phi_\sigma\circ\psi_{\TX{x}}\circ\ell_i(\eta)<0\}$ of the rectangular
patches $\mathcal{D}_i$ for $i=1,2$. \Cref{def:choice_r} enables the
use of a finite patch $\mathcal{D}_2$ in \cref{eq:int_inner_param}: the
following choice is sufficient to capture the entire panel $\sigma$
\begin{equation*}
  \mathcal{D}_2\coloneqq\begin{cases}
    \left[\rho_{\operatorname{eq}},1\right]\times [0,2\pi)&
    \text{if}~\rho_{\operatorname{eq}}<1,\\
    \varnothing& \text{if}~\rho_{\operatorname{eq}}\geq 1.\end{cases}
\end{equation*}

\begin{figure}[htbp]
  \centering
  \subcaptionbox{solutions of \cref{eq:para_hypl} for various values of
    $\rho_0$\label{fig:rho_the_r0}}%
  {\includegraphics[height=55mm]{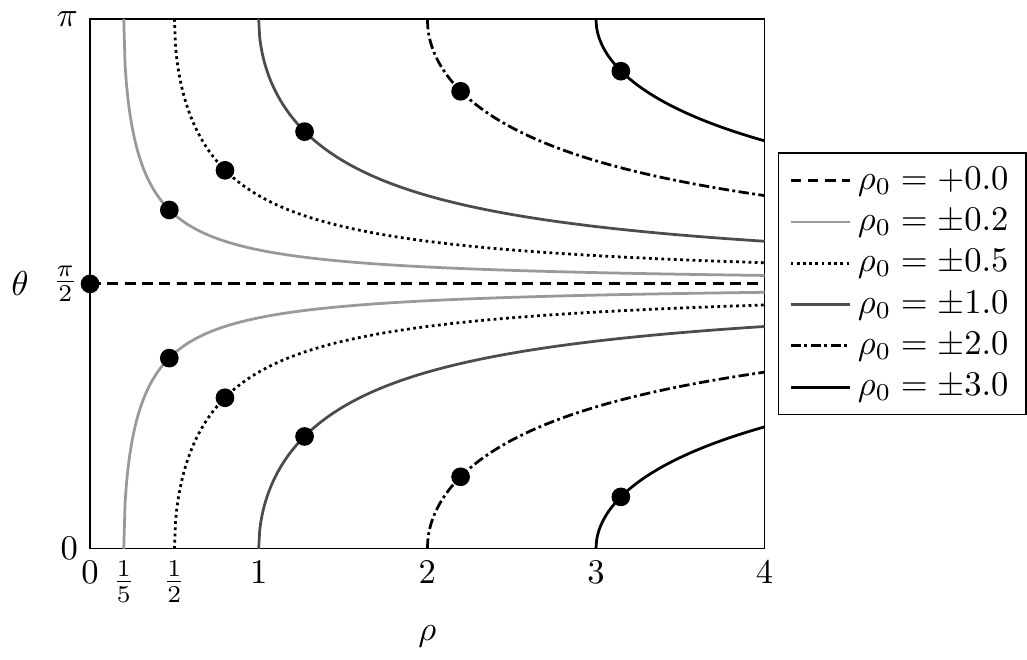}}
  \hfill
  \subcaptionbox{parametrizations $\ell_1$ and $\ell_2$ for $\rho_0=1/5$%
    \label{fig:rho_the_li}}%
  {\includegraphics[height=55mm]{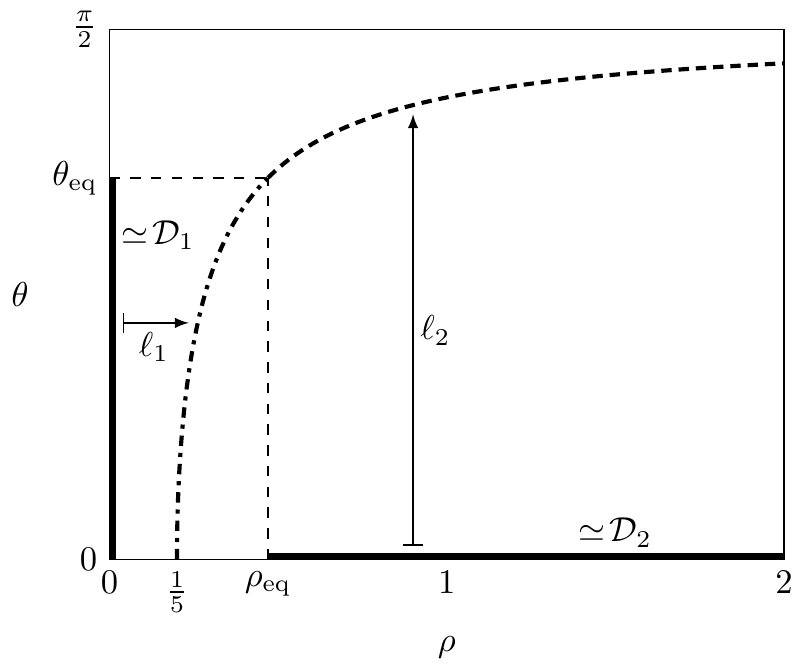}}
  \caption{In \cref{fig:rho_the_r0} curves beneath the line $\theta=\pi/2$
    are solutions of \cref{eq:para_hypl} for $\rho_0>0$, while those above
    correspond to $\rho_0<0$. The black dots indicate the points
    $(\rho_{\operatorname{eq}},\theta_{\operatorname{eq}})$,
    where the tangent to the curve has unit slope. \Cref{fig:rho_the_li}
    illustrates the piecewise parametrization of
    $\psi_{\TX{x}}^{-1}(\Xi(\TX{x})\cap\mathcal{T}_\sigma)$
    in terms of $\ell_1$ and $\ell_2$ for $\rho_0=1/5$. Depiction of the
    $\varphi$-component is omitted because the curve is merely extruded into
    the third direction.}
  \label{fig:rho_the}
\end{figure}

\begin{lemma}\label{lem:kernel_para}
  Let $k_\psi$ be as in \cref{def:int_inner} with kernel function
  $k_i,i=1,2,3$ as in \cref{eq:kfunc} and $R$ be as in \cref{def:choice_r}.
  The integral kernel $k_\psi\circ\ell_j,j=1,2$ is smooth in $\mathcal{D}_j$.
\end{lemma}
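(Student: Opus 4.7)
The plan is to reduce $k_\psi$ to an explicit closed-form expression in $\zeta=(\rho,\varphi,\theta)$ and then read off smoothness after precomposition with $\ell_j$.

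First I would compute each ingredient of $k_\psi$ explicitly. Since $Re_{\nSphere}(\varphi,\theta)$ is a unit vector, $\norm{x-\lvert_y\psi_{\TX{x}}(\zeta)}=r_0\rho$, so by \cref{eq:kfunc} the kernels reduce to
\begin{equation*}
  k_1\circ\lvert_y\psi_{\TX{x}}=\tfrac{1}{4\pi r_0\rho},\quad
  k_2\circ\lvert_y\psi_{\TX{x}}=\tfrac{\eIp{n_\Gamma}{Re_{\nSphere}(\varphi,\theta)}}{4\pi r_0\rho},\quad
  k_3\circ\lvert_y\psi_{\TX{x}}=\tfrac{\eIp{n_\Gamma}{Re_{\nSphere}(\varphi,\theta)}}{4\pi r_0^2\rho^2}.
\end{equation*}
The singular factors in $\rho$ are absorbed by the Jacobian $J_{\psi_{\TX{x}}}=\sqrt{2}r_0^3\rho^2\sin\theta$ stated after \cref{def:lc_para}. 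Since $\sigma$ lies in the affine hyperplane $\mathcal{T}_\sigma$, the signed distance $\sDist[\Sigma]$ coincides near the panel with the linear map $\TX{y}\mapsto\eIp{\TX{y}-\TX{x}_\sigma}{\nu}$; combining this with $R^\top\nu_x=(0,0,1)^\top$ from \cref{def:choice_r}, the same manipulation that produced \cref{eq:para_hypl} yields
\begin{equation*}
  \sDist[\Sigma]\circ\psi_{\TX{x}}(\zeta)=r_0(\rho_0-\rho\cos\theta),\quad
  \norm{\nabla(\sDist[\Sigma]\circ\psi_{\TX{x}})(\zeta)}=r_0\sqrt{\cos^2\theta+\rho^2\sin^2\theta}.
\end{equation*}
Assembling all pieces collapses the $\rho$-singularities completely and delivers $k_\psi(\zeta)=r_0\rho\sin\theta/\bigl(4\pi\sqrt{\cos^2\theta+\rho^2\sin^2\theta}\bigr)$ for $k=k_1$, with manifestly smooth analogues for $k_2,k_3$ that differ only by the additional factor $\eIp{n_\Gamma}{Re_{\nSphere}(\varphi,\theta)}$.

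Second, I would verify smoothness under precomposition with $\ell_j$. The numerators are polynomials in $\rho,\sin\theta,\cos\theta,\cos\varphi,\sin\varphi$, hence smooth under any smooth substitution; the only obstacle is the denominator $\sqrt{\cos^2\theta+\rho^2\sin^2\theta}$. Under $\ell_1$, the relation $\rho\cos\theta=\rho_0$ turns the radicand into $\cos^2\theta+\rho_0^2\tan^2\theta$, smooth and strictly positive on $\mathcal{D}_1$ because the latter was constructed to exclude $\theta=\pi/2$: for $\rho_0>0$ one has $\theta_{\operatorname{eq}}\in[0,\pi/2)$, while for $\rho_0<0$ one has $\theta_{\operatorname{eq}}\in(\pi/2,\pi]$. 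Under $\ell_2$, the relation $\cos\theta=\rho_0/\rho$ turns the radicand into $\rho^2-\rho_0^2+\rho_0^2/\rho^2$, which is smooth because $\rho\geq\rho_{\operatorname{eq}}>0$ on $\mathcal{D}_2$ and strictly positive because an elementary estimate shows $\rho_{\operatorname{eq}}>\abs{\rho_0}$ (equivalent to $\sqrt{\rho_0^2+4}>\abs{\rho_0}$); this same bound keeps $\arccos(\rho_0/\rho)$ inside the smoothness region of $\arccos$, so $\ell_2$ itself is smooth.

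The main technical point is precisely this last verification, namely that the breakpoint $(\rho_{\operatorname{eq}},\theta_{\operatorname{eq}})$ is chosen so as to place $\ell_1$ on the side $\cos\theta\neq 0$ and $\ell_2$ on the side $\rho>\abs{\rho_0}$ of the singular locus of the radicand. With the explicit formulas available this reduces to arithmetic. The degenerate case $\rho_0=0$ renders $\mathcal{D}_1$ empty and confines $\ell_2$ to the equator $\theta=\pi/2$, where the radicand equals $\rho^2$ and all remaining quantities are polynomials in $\rho$, hence vacuously smooth.
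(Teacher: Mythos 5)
Your proposal is correct and takes essentially the same route as the paper's proof: compute the explicit closed form of $k_\psi$ (the $\rho$-singularities of $k_1,k_2,k_3$ cancel against the Jacobian $J_{\psi_{\TX{x}}}$ and the constant gradient of the tangent-plane signed distance, using $R^\top\nu_x=(0,0,1)^\top$), then conclude smoothness of $k_\psi\circ\ell_1$ because $\theta$ stays bounded away from $\pi/2$ on $\mathcal{D}_1$ and of $k_\psi\circ\ell_2$ because $\rho\geq\rho_{\operatorname{eq}}>0$, with $\rho_0=0$ handled as a degenerate case, exactly as in the paper. The only slip is cosmetic: the $k_3$ analogue differs from the $k_1$ kernel by $\eIp{n_\Gamma}{Re_{\nSphere}(\varphi,\theta)}/(r_0\rho)$ rather than by $\eIp{n_\Gamma}{Re_{\nSphere}(\varphi,\theta)}$ alone, which is harmless since the extra $1/\rho$ is also regular on both patches (and your own first display has the correct formula).
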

\begin{proof}
  Define $\mathcal{P}_\sigma^{\TX{x}}\coloneqq
  \psi_{\TX{x}}^{-1}(\Xi(\TX{x})\cap\mathcal{T}_\sigma)$ and let
  $k_i^\ast:\mathcal{P}_\sigma^{\TX{x}}\to\RR$ be given by $k_i^\ast:\zeta\mapsto
  k_i(x,\cdot)\circ\lvert_y\psi_{\TX{x}}(\zeta)$ with $k_i,i=1,2,3$ as in
  \cref{eq:kfunc} apart from the factor $4\pi$. For $R$ as in
  \cref{def:choice_r} and any $\zeta\in\mathcal{P}_\sigma^{\TX{x}}$ it holds
  \begin{equation*}
    \eIp{\nu_x}{x-\lvert_y\psi_{\TX{x}}(\zeta)}=
    r_0\rho\eIp{R^\top\nu_x}{e_{\nSphere}(\varphi,\theta)}=
    r_0\rho\cos\theta = r_0\rho_0
  \end{equation*}
  because $\zeta$ satisfies \cref{eq:para_hypl}. Since $k_i^\ast$
  is restricted to arguments $\zeta\in\mathcal{P}_\sigma^{\TX{x}}$, it follows
  $k_1^\ast:\zeta\mapsto (r_0\rho)^{-1}$ and
  $k_i^\ast:\zeta\mapsto\ r_0\rho_0 (r_0\rho)^{-i}$ for $i=2,3$.
  Define the function $k_{\psi,i}:\mathcal{P}_\sigma^{\TX{x}}\to\RR$ by
  \begin{equation*}
    k_{\psi,i}:\zeta\mapsto
    \frac{k_i^\ast(\zeta)J_{\psi_{\TX{x}}}(\zeta)}%
    {\sqrt{2}\norm{\nabla \left(\sDist[\Sigma]\circ\psi_{\TX{x}}(\zeta)\right)}}
  \end{equation*}
  which represents $k_\psi$ in \cref{def:int_inner}. The function
  $\sDist[\Sigma]$ corresponds to
  $\sDist[\mathcal{T}_\sigma]:\TX{x}\mapsto \eIp{\TX{x}-\TX{x}_\sigma}{\nu}$,
  where $\nabla\sDist[\mathcal{T}_\sigma]:\TX{x}\mapsto\nu$ is constant-valued.
  The matrix representation of $\nabla\psi_{\TX{x}}$ reads
  \begin{equation*}
    \nabla\psi_{\TX{x}}:\zeta\mapsto D_\psi(\zeta)\coloneqq
    -r_0 \begin{pmatrix} 1 & 0 & 0 \\ R e_{\nSphere}(\varphi,\theta) &
      \rho R \partial_\varphi e_{\nSphere}(\varphi,\theta) &
      \rho R \partial_\theta e_{\nSphere}(\varphi,\theta)
    \end{pmatrix} \in\RR[4\times 3]
  \end{equation*}
  and by the chain rule we have $\nabla\left(\sDist[\Sigma]\circ\psi_{\TX{x}}
  \right):\zeta\mapsto D_\psi^\top(\zeta)\nu$. We observe
  \begin{equation*}
    \nabla\left(\sDist[\Sigma]\circ\psi_{\TX{x}}\right):\zeta\mapsto
    -r_0\begin{pmatrix}
      \nu_t +\eIp{e_{\nSphere}(\varphi,\theta)}{R^\top \nu_x}\\
      \rho\eIp{\partial_\varphi e_{\nSphere}(\varphi,\theta)}{R^\top \nu_x}\\
      \rho\eIp{\partial_\theta e_{\nSphere}(\varphi,\theta)}{R^\top \nu_x}
    \end{pmatrix}
    .
  \end{equation*}
  For $R$ as in \cref{def:choice_r} we get
  $\norm{\nabla \left(\sDist[\Sigma]\circ\psi_{\TX{x}}(\zeta)\right)} = r_0
  \sqrt{(\cos\theta)^2+(\rho\sin\theta)^2}$ and
  \begin{equation}\label{eq:kernel_para_full}
    k_{\psi,1}:\zeta\mapsto
    \frac{r_0\rho\sin\theta}{\sqrt{(\cos\theta)^2+(\rho\sin\theta)^2}} ,\quad
    k_{\psi,i}:\zeta\mapsto \rho_0 r_0^{3-i} \rho^{2-i}
    \frac{\sin\theta}{\sqrt{(\cos\theta)^2+(\rho\sin\theta)^2}}, i=2,3,
  \end{equation}
  where we used $J_{\psi_{\TX{x}}}(\zeta) =\sqrt{2}r_0^3\rho^2\sin\theta$.
  If $\rho_0=0$ holds, it follows $k_{\psi,i}(\zeta)=0$ for $i=2,3$ and
  \cref{eq:para_hypl} implies $\theta=\pi/2$ (for $\rho\neq 0$), leading to
  $k_{\psi,1}(\zeta)=r_0$. For $\rho_0\neq 0$ the singularity of
  $k_i^\ast,i=1,2,3$ plays a role only for $\ell_1$ because $\ell_2$ maps to
  $\rho\geq\rho_{\operatorname{eq}}> 0$. For $\rho_0\neq 0$ we insert
  $\rho=\rho_0/\cos\theta$ in \cref{eq:kernel_para_full}, yielding
  \begin{equation*}
    k_{\psi,1}\circ\ell_1:(\theta,\varphi)\mapsto \rho_0r_0
    \frac{\tan\theta}{\sqrt{(\cos\theta)^2+(\rho_0\tan\theta)^2}}
    ,\quad
    k_{\psi,i}\circ\ell_1:(\theta,\varphi)\mapsto \rho_0^{3-i}r_0^{3-i}
    \frac{(\cos\theta)^{i-1}\tan\theta}%
    {\sqrt{(\cos\theta)^2+(\rho_0\tan\theta)^2}}
  \end{equation*}
  for $i=2,3$, confirming their smoothness for $\theta\neq\pi/2$
  ($\theta_{\operatorname{eq}}$ is bounded away from $\pi/2$ for $\rho_0\neq 0$).
\end{proof}
There exist several procedures for evaluating integrals like
\cref{eq:int_inner_param} accurately, see, e.g.,
\cite{Mueller2013,Saye2015,Fries2017}. The algorithm employed in this paper is
a combination of quadtree subdivision and exact parametrizations of the zero
level set. In a nutshell, it attempts to identify the shape of the subset of
$\mathcal{B}_i$ that lies in a quadtree cell among a few predefined scenarios.
For these admissible cases, exact parametrizations of the relevant subset are
constructed and the transformed integrals are approximated accurately by
standard tensor-Gauss quadrature rules. If this case identification fails, the
algorithm resorts to subdivision. Our approach is based on the method proposed
in \cite{Fries2016}, however, in contrast to the cited source, the zero level
set is parametrized by means of the ideal transformation discussed in
\cite{Lehrenfeld2016}. A similar approach is elaborated in \cite{Gfrerer2018}.
We denote the depth of the quadtree by $r_{\max}\in\NN_0$ and $n_G\in\NN$ is the
number of Gaussian quadrature points per direction. For each admissible
quadtree cell at most $2n_G^2$ quadrature points are employed. The reader is
referred to \cite[Section 3.8.1]{Poelz2021} for details regarding the
implementation.

\subsection{A quadrature method for the ``outer integral''}%
\label{sec:q_outer}

In order to evaluate the bilinear forms in \cref{eq:df_ind,eq:df_dir}, integrals
of the form $\intOp{\Sigma}{w \partial_t v_h}{S}$ have to be computed,
where $v_h\in S_h^0(\Sigma_N)$ and $w$ is either in $S_h^1(\Sigma_N)$ or defined
through the action of $\opTk{k}$. To this end, we consider a fixed
panel $\sigma\subset\Sigma$ with Lipschitz boundary $\partial\sigma$. The unit
outward conormal vector field $\nu_{\partial\sigma}:\partial\sigma\to\nSphere^3$
satisfies $\eIp{\nu_{\partial\sigma}(\TX{x})}{\nu_{\Sigma}(\TX{x})}=0$ for any
$\TX{x}\in\partial\sigma$ for which it exists. Let $v\in C^1(\Sigma)$ and
define
\begin{equation}\label{eq:ext_pnl_z}
  v\rvertPnl:\TX{x}\mapsto \begin{cases}v(\TX{x})&\text{if}~\TX{x}\in\sigma,\\
    0 &\text{otherwise},\end{cases}
\end{equation}
which jumps only across $\partial\sigma$. Note that $v\rvertPnl$ represents a
basis function of $S_h^0(\Sigma_N)$ if $v$ is constant-valued. For
$w\in C(\Sigma)$ we obtain from \cite[Theorem 3.1.9]{Hoermander2003}
\begin{equation*}
  \intOp{\Sigma}{w(\TX{x}) \partial_t v\rvertPnl[\TX{x}]}{S(\TX{x})} =
  \intOp{\sigma}{w(\TX{x})\partial_t v(\TX{x})}{S(\TX{x})}
  -\intOp{\partial\sigma}{\nu_{\partial\sigma,t}(\TX{x}) w(\TX{x}) v(\TX{x})}%
  {S(\TX{x})}
  ,
\end{equation*}
where $\nu_{\partial\sigma,t}$ is the time component of the unit outward
conormal vector. The application of the cited theorem is justified because time
is a tangential coordinate on $\Sigma$. For $w\in C(\Sigma)$
and $v\in\prod_{\sigma\in\Sigma_N}C^1(\sigma)$ we deduce
\begin{equation}\label{eq:int_prod_panel}
  \intOp{\Sigma}{w(\TX{x}) \partial_t v(\TX{x})}{S(\TX{x})} =
  \sum_{\sigma\in\Sigma_N}
  \intOp{\sigma}{w(\TX{x})\partial_t v\rvert_\sigma(\TX{x})}{S(\TX{x})}
  - \sum_{\sigma\in\Sigma_N} \intOp{\partial\sigma}%
  {\nu_{\partial\sigma,t}(\TX{x}) w(\TX{x}) v\rvert_{\partial\sigma}(\TX{x})}%
  {S(\TX{x})}
  ,
\end{equation}
where $v\rvert_{\partial\sigma}$ denotes the trace of the restriction
$v\rvert_\sigma$ to $\partial\sigma$. Let the bilinear form $b_{\opA}$,
$\opA\in\{\opId,\opTk{k}\}$ be defined by $b_{\opA} : (w,v)\mapsto
\intOp{\Sigma}{\opA w(\TX{x})\partial_t v(\TX{x})}{S(\TX{x})}$.
For $v_h\in S_h^0(\Sigma_N)$, i.e., $v_h\rvert_\sigma$ is constant-valued,
$b_{\opA}$ can be evaluated via \cref{eq:int_prod_panel}, leading to
\begin{equation}\label{eq:int_bform}
  b_{\opA} : (w,v_h) \mapsto -\sum_{\sigma\in\Sigma_N} \intOp{\partial\sigma}%
  {\nu_{\partial\sigma,t}(\TX{x})v_h\rvert_{\partial\sigma}(\TX{x})\opA w(\TX{x})}%
  {S(\TX{x})}
\end{equation}
if $\opA w$ is continuous across $\partial\sigma$. Note that the use of the
lowest order test space causes integrals on $\sigma$ to vanish in
\cref{eq:int_bform}. Since $\partial\sigma$ is composed of four $2$-simplices,
we only require a quadrature technique for triangles.
In classical time domain discretization schemes, certain singularities of
functions induced by retarded layer potentials have been studied, leading to
carefully developed quadrature schemes \cite{Stephan2008,Ostermann2010}.
Such an analysis in the space-time context could unveil the regularity of
the function $\TX{x}\mapsto \opTk{k}w(\TX{x})$ for different kernels $k$ and
regularity classes of $w$. These investigations might drive the design of
tailored quadrature schemes for \cref{eq:int_bform}. While such comprehensive
surveys lie beyond the scope of this work, the occurrence of singularities in
the function $\TX{x}\mapsto \opTk{k}w(\TX{x})$ is hinted in \cref{app:appendix}
by virtue of an example.
Due to the lack of smoothness, we suggest to apply composite midpoint rules in
order to evaluate \cref{eq:int_bform} for $\opA=\opTk{k}$, see
\cref{fig:int_tri_mid}.

\begin{figure}[htbp]
  \centering
  \includegraphics[width=.99\textwidth]{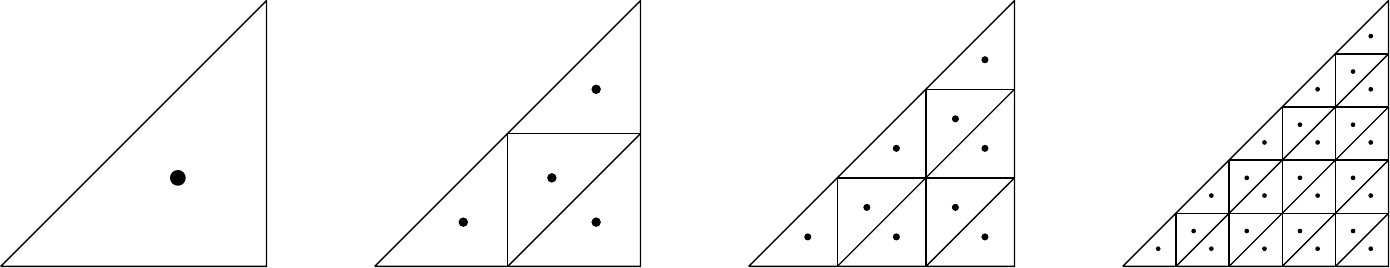}
  \caption{Illustration of quadrature rules employed for \cref{eq:int_bform};
    for $m_Q\in\NN$ the reference triangle is subdivided into $m_Q^2$ congruent
    triangles. Due to the lack of smoothness of the integrand, the midpoint
    rule is applied in each triangle. The triangles and midpoints, indicated by
    black dots, are displayed for $m_Q=1,2,3,5$ (from left to right).}
  \label{fig:int_tri_mid}
\end{figure}

\subsection{An algorithm for computing the set of lit panels efficiently}%
\label{sec:algo}

As discussed in \cref{sec:rp_lc}, retarded layer potentials evaluated at
$\TX{x}\in\RR[4]$ integrate along $\Xi(\TX{x})\cap\Sigma$. Given a mesh
$\Sigma_N$, the set of panels lit by the backward light cone is denoted
\begin{equation*}
  \Sigma_N^\Xi(\TX{x}) \coloneqq
  \{\sigma\in\Sigma_N:\Xi(\TX{x})\cap\sigma\neq\varnothing\}
  .
\end{equation*}
Both $\Sigma$ and $\Xi(\TX{x})$ are three-dimensional hypersurfaces, implying
that $\Xi(\TX{x})\cap\Sigma$ is two-dimensional, unless it degenerates. The
dimensions of these sets suggest that although $|\Sigma_N|=N$ holds, we expect
$|\Sigma_N^\Xi(\TX{x})|=\mathcal{O}(N^{2/3})$ as $N\to\infty$ for a sequence of
quasiuniform meshes.
Given a density function $w:\Sigma\to\RR$, our goal is to implement the
evaluation of the linear retarded layer potential of \cref{sec:rp_lc}
\begin{equation*}
  \opTk{k} w(\TX{x}) =
  \sum\nolimits_{\sigma\in\Sigma_N}\opTk{k}w\rvertPnl[\TX{x}] =
  \sum\nolimits_{\sigma\in\Sigma_N^\Xi(\TX{x})}\opTk{k}w\rvertPnl[\TX{x}],
\end{equation*}
where $w\rvertPnl$ is defined as in \cref{eq:ext_pnl_z}. In an approach we are
inclined to label ``naive'', $\Sigma_N^\Xi(\TX{x})$ is constructed by
considering each panel $\sigma\in\Sigma_N$ individually and verifying if
$\Xi(\TX{x})\cap\sigma$ is nonempty. Clearly, this procedure involves
$\mathcal{O}(N)$ operations (the maximum amount of operations necessary to
verify $\Xi(\TX{x})\cap\sigma\neq\varnothing$ is independent of $N$), spoiling
the $\mathcal{O}(N^{2/3})$ behavior dictated by the cardinality of
$\Sigma_N^\Xi(\TX{x})$.

In the subsequent paragraphs, we exhibit a straightforward algorithm for
constructing $\Sigma_N^\Xi(\TX{x})$ more efficiently. Assume we were given a set
$X_N(\TX{x})\subset\Sigma_N$ such that
$\Sigma_N^\Xi(\TX{x})\subset X_N(\TX{x})$ and
$|X_N(\TX{x})|\leq C|\Sigma_N^\Xi(\TX{x})|$ held for some $C\geq 1$ independent
of $N$. We proceed naively on $X_N(\TX{x})$ by checking every
$\sigma\in X_N(\TX{x})$ if $\Xi(\TX{x})\cap\sigma$ is nonempty and if so,
it is a member of $\Sigma_N^\Xi(\TX{x})$. By assumption
$|X_N(\TX{x})|<C|\Sigma_N^\Xi(\TX{x})|=\mathcal{O}(N^{2/3})$ holds, hence the
computational cost of this approach is dictated by the amount of operations
necessary to set up $X_N(\TX{x})$. The algorithm for assembling $X_N(\TX{x})$
is based on a hierarchical organization of the panels.
\begin{definition}[Binary Cluster Tree]\label{def:bct}
  Let $\IDX$ be an index set with $|\IDX|=N$. Each index in $\IDX$ corresponds
  to a unique panel in $\Sigma_N$ via the bijection $\IDX\to\Sigma_N$,
  $i\mapsto\sigma_i$. Let $\BCT\coloneqq(\VTX,\EDG)$ be a tree with vertex set
  $\VTX$, edge set $\EDG$, and let $n_{\min}\in\NN$ be given. For $v\in \VTX$
  define the sets $\sons(v)\coloneqq\{w\in\VTX:(v,w)\in\EDG\}$ and
  $\leaves(\BCT)\coloneqq\{w\in\VTX:\sons(v)=\varnothing\}$. The tree
  $\BCT$ is called binary cluster tree if
  \begin{enumerate}[(i)]
  \item $\operatorname{root}\BCT=\IDX$,
  \item for all $v\in\VTX$ it holds $v\subset\IDX$ and $v\neq\varnothing$,
  \item $v\in\leaves(\BCT) \Rightarrow |v|\leq n_{\min}$,
  \item for all $v\in\VTX$ it holds either $\sons(v)=\varnothing$ or
    $\sons(v)=\{v^\prime,v^{\prime\prime}\}$ with
    $v=v^\prime \,\cup\, v^{\prime\prime}$ and
    $v^\prime\,\cap\, v^{\prime\prime}=\varnothing$, i.e., any
    $v\not\in\leaves(\BCT)$ has two disjoint successors whose union is $v$.
  \end{enumerate}
  The vertices are called clusters and we identify $\VTX$ with $\BCT$, i.e., we
  write $v\in\BCT$ instead of $v\in\VTX$.
\end{definition}
The construction of the cluster tree $\BCT$ is performed as discussed in
\cite[Section 1.4.1.1, Equation (1.21)]{Bebendorf2008}, which involves
$\mathcal{O}(N\log(N))$ operations for quasiuniform meshes
\cite[Theorem 1.27]{Bebendorf2008}. In essence, $\BCT$ depends on $\Sigma_N$
and $n_{\min}$ only, therefore, it is set up once and used for every
evaluation point.
For each $v\in\BCT$ we find a bounding ball $B(v)\subset\RR[4]$, such
that $\Xi(\TX{x})\cap B(v)=\varnothing$ implies
$\sigma_i\not\in\Sigma_N^\Xi(\TX{x})$ for any $\sigma_i,i\in v$.

\begin{theorem}\label{thm:phi_xi_sphere}
  Let $\TX{x}\in\RR[4]$ be given and $B_r(\TX{y})$ be the closed ball of
  radius $r>0$ around $\TX{y}\in\RR[4]$. It holds
  \begin{equation*}
    \min_{\TX{z} \in B_r(\TX{y})} \phi_\Xi(\TX{x}-\TX{z})
    = \phi_\Xi(\TX{x}-\TX{y})-d_r(\norm{x-y})
    ,\quad
    \max_{\TX{z} \in B_r(\TX{y})} \phi_\Xi(\TX{x}-\TX{z})
    = \phi_\Xi(\TX{x}-\TX{y})+r\sqrt{2},
  \end{equation*}
  where $d_r:[0,\infty)\to\left[r,r\sqrt{2}\right]$ is defined by
  \begin{equation*}
    d_r:\alpha\mapsto\begin{cases} \alpha+\sqrt{r^2-\alpha^2} &
      \text{if}~0\leq\alpha<r/\sqrt{2},
      \\ r\sqrt{2} &\text{if}~\alpha\geq r/\sqrt{2}.\end{cases}
  \end{equation*}
\end{theorem}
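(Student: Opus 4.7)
The plan is to parametrize $\TX{z}\in B_r(\TX{y})$ as $\TX{z}=\TX{y}+(s,u)$ with $s\in\RR$, $u\in\RR[3]$ and $s^2+\norm{u}^2\leq r^2$. Using $\phi_\Xi$ from \cref{eq:sgn_dist_cone}, one obtains the separation
\begin{equation*}
  \phi_\Xi(\TX{x}-\TX{z}) = \norm{(x-y)-u} + s - (t-\tau),
\end{equation*}
in which the dependence on $u$ and on $s$ decouples, and the only remaining link to $\TX{x},\TX{y}$ is the scalar $\alpha\coloneqq\norm{x-y}$. Both extremal problems thus reduce to two-variable optimizations over $(\norm{u},s)$ in the closed disk $\{\beta\geq 0,\,\beta^2+s^2\leq r^2\}$.

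For the maximum I would combine the triangle inequality $\norm{(x-y)-u}\leq\alpha+\norm{u}$ with Cauchy--Schwarz in the form $\norm{u}+s\leq\sqrt{2(\norm{u}^2+s^2)}\leq r\sqrt{2}$. This yields the upper bound $\phi_\Xi(\TX{x}-\TX{y})+r\sqrt{2}$, and equality is attained by taking $u$ antiparallel to $x-y$ (or any direction if $\alpha=0$) with $\norm{u}=r/\sqrt{2}$ together with $s=r/\sqrt{2}$. Hence the maximum formula follows immediately.

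For the minimum the reverse triangle inequality gives $\norm{(x-y)-u}\geq\abs{\alpha-\beta}$ with $\beta\coloneqq\norm{u}$, attained by choosing $u$ (anti)parallel to $x-y$. Once the direction is fixed optimally, the smallest admissible $s$ is $s=-\sqrt{r^2-\beta^2}$, reducing the task to minimizing the scalar function $h(\beta)\coloneqq\abs{\alpha-\beta}-\sqrt{r^2-\beta^2}$ over $\beta\in[0,r]$. On the branch $\beta\geq\alpha$ (empty if $\alpha>r$) the derivative is $1+\beta/\sqrt{r^2-\beta^2}>0$, so $h$ is strictly increasing. On the branch $\beta\leq\alpha$ the derivative is $-1+\beta/\sqrt{r^2-\beta^2}$, vanishing uniquely at $\beta=r/\sqrt{2}$, a point which is admissible on this branch precisely when $\alpha\geq r/\sqrt{2}$.

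The main obstacle is the subsequent case split and verifying that it matches the piecewise definition of $d_r$. If $\alpha\geq r/\sqrt{2}$ the interior critical point lies in the lower branch and gives $h(r/\sqrt{2})=\alpha-r\sqrt{2}$; comparison with the branch junction value $h(\alpha)=-\sqrt{r^2-\alpha^2}$ confirms it is the global minimum (the difference is nonpositive with equality at $\alpha=r/\sqrt{2}$), and after subtracting $\alpha$ one reads off exactly $-d_r(\alpha)=-r\sqrt{2}$. If $\alpha<r/\sqrt{2}$ the critical point is inadmissible on the lower branch, so $h$ is strictly decreasing on $[0,\alpha]$ and strictly increasing on $[\alpha,r]$, and the global minimum $h(\alpha)=-\sqrt{r^2-\alpha^2}$ translates to $-d_r(\alpha)=-(\alpha+\sqrt{r^2-\alpha^2})$. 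The edge case $\alpha>r$ is subsumed by the first regime since then $\alpha>r/\sqrt{2}$ automatically, and continuity of $d_r$ at $r/\sqrt{2}$ is a one-line check since both pieces give $r\sqrt{2}$ there.
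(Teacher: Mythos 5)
Your proposal is correct and follows essentially the same route as the paper: split off the time component, use the triangle inequality (maximized at time offset $r/\sqrt{2}$) for the maximum and the reverse triangle inequality for the minimum, then minimize $\lvert\alpha-\beta\rvert-\sqrt{r^2-\beta^2}$ over $\beta\in[0,r]$ with the case split at $\alpha=r/\sqrt{2}$; your exact reformulation even makes the paper's separate sharpness checks with explicit points unnecessary. One small slip: equality in the reverse triangle inequality requires $u$ parallel (same direction, not antiparallel) to $x-y$, which is what your subsequent computation in fact uses.
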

\begin{proof}
  We employ the decomposition $\TX{z}\coloneqq(s,z)$ with $s\in\RR$ and
  $z\in\RR[3]$. It holds $\TX{z}\in B_r(\TX{y})$ iff
  $\norm{z-y}\leq\sqrt{r^2-(s-\tau)^2}$ holds. We expand
  \begin{equation}\label{eq:phi_exp}
    \phi_\Xi(\TX{x}-\TX{z}) = \norm{x-z}-(t-s) =\norm{x-y+y-z}-(t-\tau+\tau-s)
  \end{equation}
  and the triangle inequality yields
  \begin{equation*}
    \phi_\Xi(\TX{x}-\TX{z}) \leq \norm{x-y}+\norm{z-y} -(t-\tau)+(s-\tau)
    \leq \phi_\Xi(\TX{x}-\TX{y})+\sqrt{r^2-(s-\tau)^2}+(s-\tau)
    .
  \end{equation*}
  The maximum is attained at $s-\tau=r/\sqrt{2}$, yielding the bound
  $\phi_\Xi(\TX{x}-\TX{z}) \leq \phi_\Xi(\TX{x}-\TX{y})+r\sqrt{2}$ for any
  $\TX{z}\in B_r(\TX{y})$. The bound is sharp, because it holds
  \begin{equation*}
    \TX{z}\coloneqq \TX{y}- \frac{r}{\sqrt{2}}
    \begin{pmatrix}-1\\(x-y)/\norm{x-y}\end{pmatrix} \in
    B_r(\TX{y}) \quad \Rightarrow \quad
    \phi_\Xi(\TX{x}-\TX{z})=\phi_\Xi(\TX{x}-\TX{y})+r\sqrt{2}
    .
  \end{equation*}
  Considering the lower bound, we apply the reverse triangle inequality to
  \cref{eq:phi_exp}
  \begin{equation}\label{eq:phi_low_bnd}
    \phi_\Xi(\TX{x}-\TX{z}) \geq \left|\norm{x-y}-\norm{z-y}\right|
    -(t-\tau)+(s-\tau) \geq
    \left|\norm{x-y}-\norm{z-y}\right|-(t-\tau)-\sqrt{r^2-\|z-y\|^2},
  \end{equation}
  where we used $s-\tau\geq -|s-\tau|\geq -\sqrt{r^2-\|z-y\|^2}$. We abbreviate
  $\beta\coloneqq\norm{z-y}$ and declare $f_r:[0,r]\to\RR$ by
  $\beta\mapsto \left|\norm{x-y}-\beta\right|-\sqrt{r^2-\beta^2}$ such that
  $\phi_\Xi(\TX{x}-\TX{z}) \geq f_r(\beta)-(t-\tau)$ holds. The (weak)
  derivative
  \begin{equation*}
    f_r^\prime : \beta\mapsto -\sgn\left(\norm{x-y}-\beta\right)
    +\beta/\sqrt{r^2-\beta^2}
  \end{equation*}
  satisfies $f_r^\prime(\beta)<0$ iff both $\beta<\norm{x-y}$ and
  $\beta\leq r/\sqrt{2}$ hold. We distinguish two scenarios:
  $\norm{x-y}\geq r/\sqrt{2}$ or $\norm{x-y}< r/\sqrt{2}$.
  Assuming $\norm{x-y}\geq r/\sqrt{2}$ holds, it follows $f_r^\prime(\beta)<0$
  iff $\beta\leq r/\sqrt{2}$ and $f_r$ is monotonically decreasing in
  $[0,r/\sqrt{2})$ while it is nondecreasing (almost) everywhere else.
  Therefore, $f_r$ attains its minimum at $\beta=r/\sqrt{2}$. Insertion of
  $\norm{z-y}=r/\sqrt{2}\leq\norm{x-y}$ in \cref{eq:phi_low_bnd} yields
  $\phi_\Xi(\TX{x}-\TX{z}) \geq \phi_\Xi(\TX{x}-\TX{y})-r\sqrt{2}$ for any
  $\TX{z}\in B_r(\TX{y})$. The sharpness of this bound is confirmed by
  \begin{equation*}
    \TX{z}\coloneqq \TX{y}+ \frac{r}{\sqrt{2}}
    \begin{pmatrix}-1\\(x-y)/\norm{x-y}\end{pmatrix} \in
    B_r(\TX{y}) \quad \Rightarrow \quad
     \phi_\Xi(\TX{x}-\TX{z}) =
    \norm{x-y-\frac{r}{\sqrt{2}}\frac{x-y}{\norm{x-y}}}-(t-\tau+r/\sqrt{2}),
  \end{equation*}
  which yields $\phi_\Xi(\TX{x}-\TX{z})=\phi_\Xi(\TX{x}-\TX{y})-r\sqrt{2}$ for
  $\norm{x-y}\geq r/\sqrt{2}$. We turn our attention to the case
  $\norm{x-y}<r/\sqrt{2}$. Since $f_r^\prime(\beta)<0$ holds iff
  $\beta<\norm{x-y}$ holds, the minimum of $f_r$ is located at
  $\beta=\norm{x-y}$. Insertion of $\beta=\norm{z-y}=\norm{x-y}$ in
  \cref{eq:phi_low_bnd} yields
  \begin{equation*}
    \phi_\Xi(\TX{x}-\TX{z}) \geq -(t-\tau)-\sqrt{r^2-\|x-y\|^2} =
    \phi_\Xi(\TX{x}-\TX{y})-\norm{x-y}-\sqrt{r^2-\|x-y\|^2}.
  \end{equation*}
  Finally
  \begin{equation*}
    \TX{z}\coloneqq 
    \begin{pmatrix}\tau-(r^2-\norm{x-y}^2)^{1/2}\\x\end{pmatrix} \in
    B_r(\TX{y}) \quad \Rightarrow \quad
    \phi_\Xi(\TX{x}-\TX{z}) = -(t-\tau)-\sqrt{r^2-\|x-y\|^2}
  \end{equation*}
  confirms the sharpness of the stated bound for $\norm{x-y}< r/\sqrt{2}$ and
  the proof is complete.
\end{proof}
\Cref{thm:phi_xi_sphere} is applied in line 2 of \cref{alg:lit_leaves}. There is
no root of $\TX{z}\mapsto\phi_\Xi(\TX{x}-\TX{z})$ for $\TX{z}\in B_r(\TX{y})$,
i.e., the bounding ball of $v\in\BCT$ is not lit by $\Xi(\TX{x})$, iff either
its minimum value is positive or its maximum value is negative. As an
initialization, set $L(\TX{x})\coloneqq\varnothing$ and call
\texttt{ApproximateLitLeaves}$(L(\TX{x}),\operatorname{root}\BCT)$. Once the
algorithm concludes, set
$X_N(\TX{x})\coloneqq\bigcup_{v\in L(\TX{x})}\bigcup_{i\in v}\sigma_i$. In
line $1$ of \cref{alg:lit_leaves}, the routine \texttt{GetBoundingSphere}($v$)
returns a precomputed bounding sphere that encloses all $\sigma_i,i\in v$. In
our implementation, we use a slightly modified version of the algorithm laid
out in \cite{Ritter1990}, which computes a nonminimal bounding sphere.
Although the cited source exhibits the algorithm explicitly for $\RR[3]$, its
extension to $\RR[n],n\in\NN$ is obvious.

\begin{algorithm}
  \caption{\texttt{ApproximateLitLeaves}$(L(\TX{x}),v)$. Given the current
    iterate of $L(\TX{x})\subset\leaves\BCT$ and $v\in\BCT$.}
  \label{alg:lit_leaves}
  \begin{algorithmic}[1]
    \State $B_r(\TX{y})\coloneqq$ \texttt{GetBoundingSphere}($v$)
    \If{$\phi_\Xi(\TX{x}-\TX{y})<-r\sqrt{2}$ \textbf{or}
      $\phi_\Xi(\TX{x}-\TX{y})>d_r(\norm{x-y})$}
    \State \textbf{return}
    \EndIf
    \If{$v\in\leaves(\BCT)$}
    \State $L(\TX{x}) \gets L(\TX{x})\cup\{v\}$
    \Else
    \ForAll {$v^\prime\in\sons(v)$}
    \State \texttt{ApproximateLitLeaves}$(L(\TX{x}),v^\prime)$
    \EndFor
    \EndIf
  \end{algorithmic}
\end{algorithm}

\begin{remark}\label{rem:fast_bem}
  The proposed algorithm is based on concepts typically encountered in
  fast BEMs. Nevertheless, this approach does not constitute a traditional
  ``fast method''; it implements evaluation procedures of exact (apart from
  quadrature) retarded potential integral operators efficiently. The necessity
  for such implementational tricks, even outside the realm of fast methods,
  arises because retarded potentials are not classically global operators, but
  their integrals are supported on (subsets of) the hypersurface $\Xi(\TX{x})$.
\end{remark}

\section{Numerical experiments}\label{sec:numex}

The purpose of this section is to verify the proposed methods and provide
evidence about the capacity of space-time BEMs for RPBIEs. Further
numerical experiments are given in \cite[Chapter 4]{Poelz2021}.

\subsection{Experiment 1: computation of lit panels}\label{sec:ex_clst}

The first experiment investigates the performance of the method discussed in
\cref{sec:algo}, which computes the set of lit panels $\Sigma_N^\Xi(\TX{x})$
efficiently. Two computational domains are considered, namely the unit cube
$\Omega^-=\left(-\frac{1}{2},\frac{1}{2}\right)^3$ with
$T=1$ and the unit ball $\Omega^-=\left\{x\in\RR[3]:\norm{x}<1\right\}$ with
$T=5$. We examine the three evaluation points $\TX{x}_\ast\coloneqq(T,x_\ast)$,
$\ast\in\{A,B,C\}$, where $x_\ast\in\RR[3]$ are the spatial components
\begin{equation*}
  x_A\coloneqq\begin{pmatrix}0&0&0\end{pmatrix}^\top
  ,\quad
  x_B\coloneqq\tfrac{1}{\sqrt{3}}\begin{pmatrix}1&1&1\end{pmatrix}^\top
  ,\quad
  x_C\coloneqq\begin{pmatrix}-1&-\frac{\sqrt{2}}{2}&-\frac{1}{\pi}
  \end{pmatrix}^\top
  .
\end{equation*}
As a preliminary test, the cardinalities of the set of lit panels
$|\Sigma_N^\Xi(\TX{x})|$ and the proxy set $|X_N(\TX{x})|$ are investigated. The
evaluation point $\TX{x}_C$ is chosen and three upper bounds for the size of
leaf-level clusters $n_{\min}\in\{1,5,50\}$ are employed. Results of this study
are displayed in \cref{fig:ex_set_card}. On the one hand, the conjectured
$\mathcal{O}(N^{2/3})$ behavior of $|\Sigma_N^\Xi(\TX{x}_C)|$ can be observed.
On the other hand, the results suggest the existence of a constant $C>1$ such
that $|X_N(\TX{x}_C)|<C|\Sigma_N^\Xi(\TX{x}_C)|$ holds, which is the key
assumption in \cref{sec:algo}. Furthermore,
$|\Sigma_N^\Xi(\TX{x}_C)|<|X_N(\TX{x}_C)|$ holds in all considered cases, even
for $n_{\min}=1$.

\begin{figure}[htbp]
  \centering
  \subcaptionbox{cardinalities of sets, unit cube%
    \label{fig:ex_set_card_cube}}%
  {\includegraphics[height=55mm]{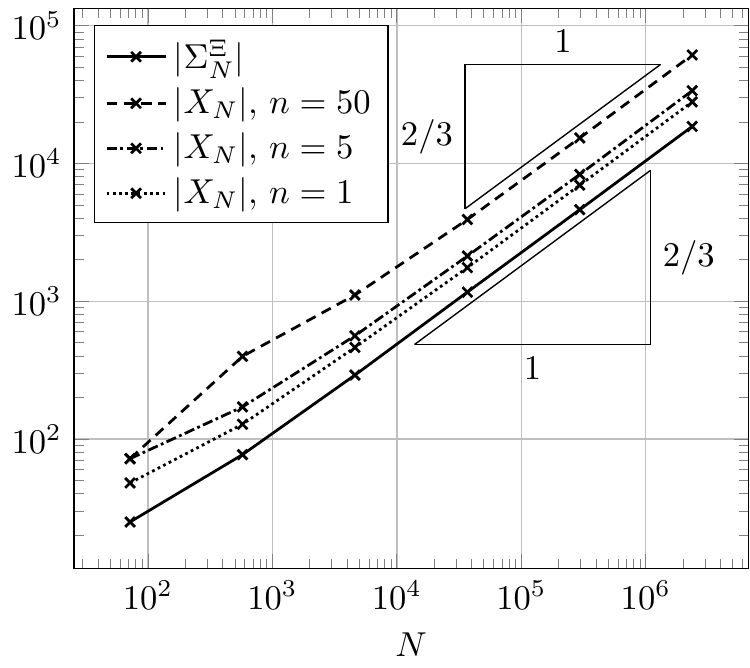}}
  \hspace{15mm}
  \subcaptionbox{cardinalities of sets, unit sphere%
    \label{fig:ex_set_card_sphere}}%
  {\includegraphics[height=55mm]{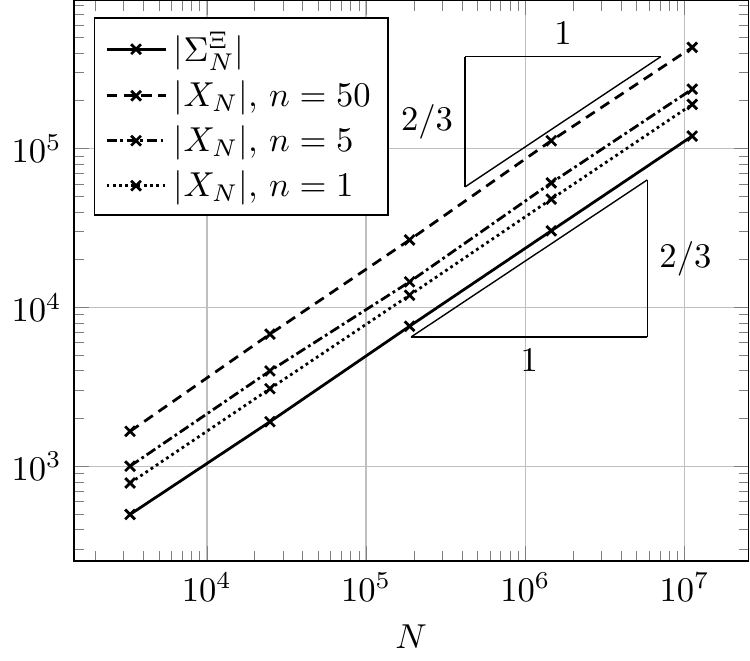}}
  \caption{Cardinalities of the set of lit panels $\Sigma_N^\Xi(\TX{x}_C)$ and
    the proxy set $X_N(\TX{x}_C)$; the number $n\coloneqq n_{\min}\in\{1,5,50\}$
    indicates the maximum size of leaf-level clusters.}
  \label{fig:ex_set_card}
\end{figure}

A second example is considered, which aims at demonstrating the increase in
performance achieved by the proposed technique. The naive approach (check every
$\sigma\in\Sigma_N$ if $\sigma\in\Sigma_N^\Xi(\TX{x})$ holds) is compared to the
procedure outlined in \cref{sec:algo}:
\begin{enumerate}
\item $L(\TX{x})\coloneqq\varnothing$,
  \texttt{ApproximateLitLeaves}$(L(\TX{x}),\operatorname{root}\BCT)$,
\item $X_N(\TX{x})\coloneqq\bigcup_{v\in L(\TX{x})}\bigcup_{i\in v}\sigma_i$,
\item check every $\sigma\in X_N(\TX{x})$ if $\sigma\in \Sigma_N^\Xi(\TX{x})$
  holds.
\end{enumerate}
Both approaches construct the same set $\Sigma_N^\Xi(\TX{x})$, however, the
elapsed times differ. Let $t_N$ be the execution time of the naive approach
and $t_n$ be the time required to perform above list of three steps. Again, the
subscript $n\coloneqq n_{\min}\in\{1,50\}$ is the maximum size of leaf-level
clusters. All execution times (provided in ordinary time, seconds) reported in
\cref{fig:ex_comp_time} are minimum values of five consecutive runs of the
stated procedures. The displayed results suggest that $t_N$ behaves like
$\mathcal{O}(N)$, while $t_n$ features an $\mathcal{O}(N^{2/3})$ behavior. The
three
solid lines in \cref{fig:ex_comp_time_cube,fig:ex_comp_time_sphere} overlap,
indicating that $t_N$ depends little on the actual position of $\TX{x}$.
However, $t_n$ depends heavily on $\TX{x}$, at least in
\cref{fig:ex_comp_time_cube}. This shows that the proposed algorithm can
yield particularly large reductions of the execution time for points
$\TX{x}$ such that $|\Sigma_N^\Xi(\TX{x})|$ is small. The difference between
$t_{50}$ and $t_1$ is noteworthy, suggesting that the extreme choice
$n_{\min}=1$ is advantageous if $\Sigma_N^\Xi(\TX{x})$ is computed for
sufficiently many evaluation points $\TX{x}$.

\begin{figure}[htbp]
  \centering
  \subcaptionbox{construction time of $\Sigma_N^\Xi$, unit cube%
    \label{fig:ex_comp_time_cube}}%
  {\includegraphics[height=55mm]{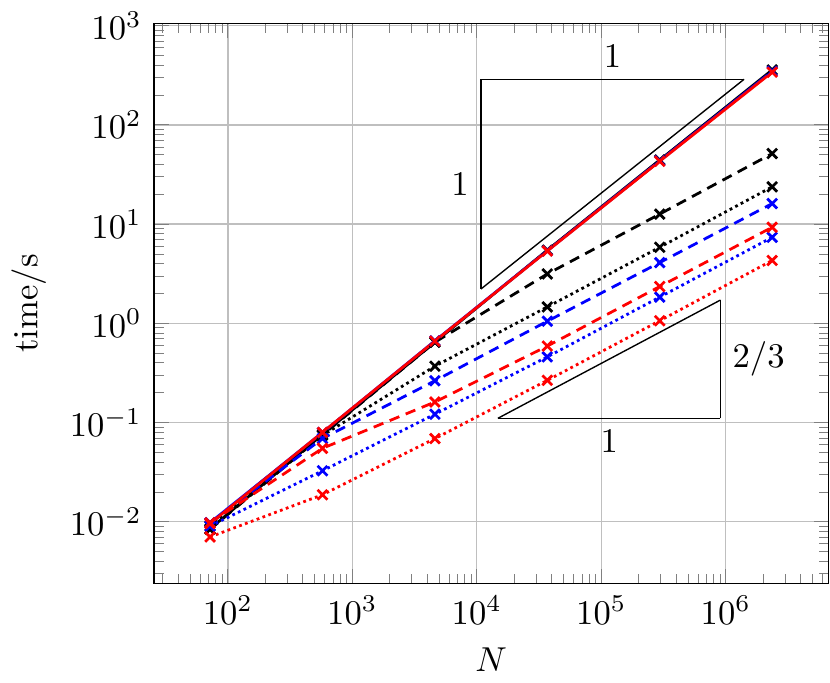}}
  \hfill
  \subcaptionbox{construction time of $\Sigma_N^\Xi$, unit sphere%
    \label{fig:ex_comp_time_sphere}}%
  {\includegraphics[height=55mm]{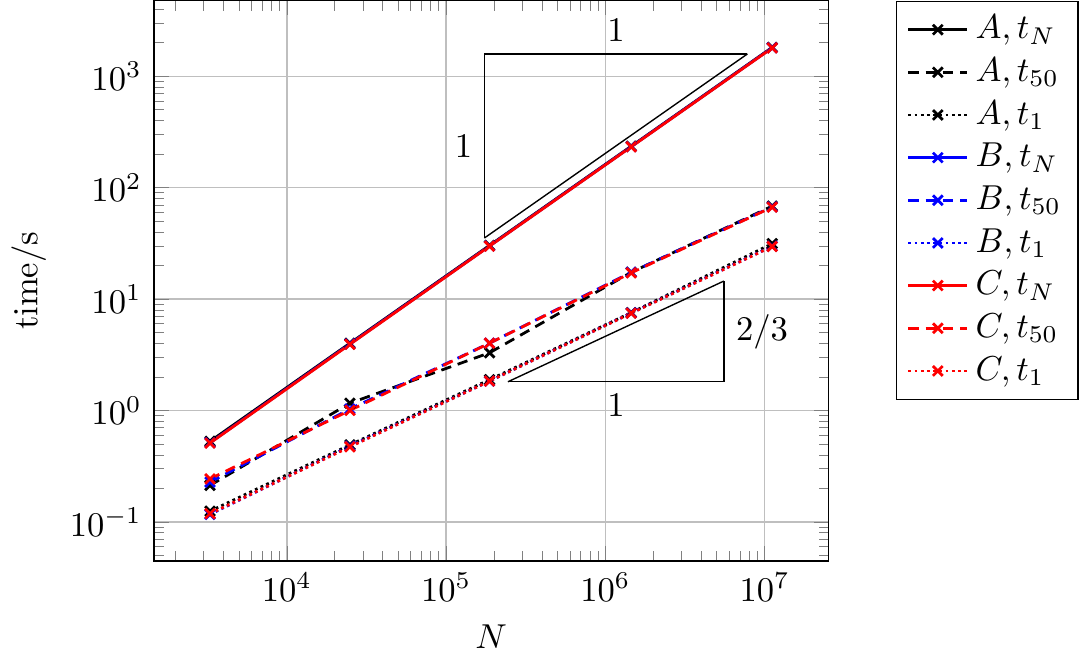}}
  \caption{Construction time of the set of lit panels $\Sigma_N^\Xi(\TX{x}_\ast)$
    for $\ast\in\{A,B,C\}$; the ordinates depict the elapsed time in seconds.
    The legend on the right is valid for both plots and the subscript
    $n\coloneqq n_{\min}\in\{1,50\}$ of $t_n$ indicates the corresponding
    maximum size of leaf-level clusters. The value $t_N$ is the execution time
    of the naive approach (check every $\sigma\in\Sigma_N$). In both plots,
    the lines of $t_N$ overlap for all points $\TX{x}_A$, $\TX{x}_B$, and
    $\TX{x}_A$.%
    }
  \label{fig:ex_comp_time}
\end{figure}

Finally, note that all exhibited elapsed times are obtained on an
\textup{Intel\textsuperscript{\textregistered} Core\texttrademark}
i7-8700 desktop machine with a clock speed of \SI{3.2}{\GHz}.
The absolute values of the execution times in \cref{fig:ex_comp_time} are of
little significance because the implementation is both single-threaded and
immature. Nevertheless, the presented results reveal the improvement
of the asymptotic behavior due to the method proposed in \cref{sec:algo}.

\subsection{Experiment 2: quadrature method}\label{sec:ex_int}

The goal of this section is to verify the quadrature scheme of
\cref{sec:q_inner}. To this end, we revisit the experiments carried out in
\cite[Section 4.1]{Poelz2019a} and compare the results. The experimental
setup is recapped for the sake of completeness. As a computational domain we
consider the unit cube $\Omega^-=\left(-\frac{1}{2},\frac{1}{2}\right)^3$
with $T=5$. Given a function $u:Q^+\to\RR$ subject to \cref{eq:pde,eq:ic}, we
evaluate the function
\begin{equation}\label{eq:u_khoff}
  \widetilde{u}:\TX{x}\mapsto\begin{cases}
    \potDl\trDiri[+] u(\TX{x})-\potSl\trNeum[+] u(\TX{x})
    &\text{if}~\TX{x}\in Q^+,
    \\
    \bioDl\trDiri[+] u(\TX{x})-\bioSl\trNeum[+] u(\TX{x})
    +\mathcal{J}(x)\trDiri[+] u(\TX{x})
    &\text{if}~\TX{x}\in\Sigma,
  \end{cases}
\end{equation}
where $\mathcal{J}:\Gamma\to[0,1]$ is the solid angle, see
\cite[Equation (6.11)]{Steinbach2008}. All integral operators in
\cref{eq:u_khoff} are approximated by the quadrature method introduced in
\cref{sec:q_inner}. This is the only relevant source of the error
$u-\widetilde{u}$ because $u=\widetilde{u}$ would hold if all integral
operators were evaluated exactly (note that the exact Cauchy data are used in
\cref{eq:u_khoff}). The chosen solution $u$ of \cref{eq:pde,eq:ic} is a
spherical wave function
\begin{equation}\label{eq:sphere_wave}
  u:\TX{x}\mapsto\frac{\mu(t-\norm{x-y_S})}{\norm{x-y_S}}
  ,
\end{equation}
where $y_S\in\Omega^-$ is set to
$y_S\coloneqq \begin{pmatrix}-0.1&-0.2&-0.3\end{pmatrix}^\top$ and
$\mu:\RR\to\RR$ is given by
\begin{equation*}
  \mu:t\mapsto\begin{cases}
    \exp\left(\left(\frac{t^2}{4}-t\right)^{-1}\right)&\text{if}~t\in (0,4),
    \\
    0&\text{otherwise}
    .
  \end{cases}
\end{equation*}
This choice of $\mu$ is smooth $\mu\in C^\infty(\RR)$ and causal, i.e.,
$\mu(t)=0$ holds for all $t\leq 0$.

In the first example, a mesh $\Sigma_N$ of $N=180$ panels is considered. The
evaluation point is set to $\TX{x}_d\coloneqq (T,x_d)$, where $x_d\in\RR[3]$
is given by $x_d\coloneqq\begin{pmatrix}0.5&0.5&0.5\end{pmatrix}^\top + d
\begin{pmatrix}1&0&0\end{pmatrix}^\top$ with $0\leq d=\sDist[\Gamma](x_d)$.
The relative error measure
\begin{equation*}
  \mathrm{e}_d\coloneqq
  \frac{|u(\TX{x}_d)-\widetilde{u}(\TX{x}_d)|}{|u(\TX{x}_d)|}
\end{equation*}
is evaluated for $d\in\{0,0.1,1,3\}$. The quadrature scheme discussed at the
end of \cref{sec:q_inner} has two main input parameters, namely the number of
quadrature points per direction $n_G\in\NN$ and the depth of the quadtree
$r_{\max}\in\NN_0$. We consider $r_{\max}\in\{10,20\}$ and study the convergence
with respect to $n_G$. Results of this experiment are exhibited in
\cref{fig:ex_q_pnt_r1} for $r_{\max}=10$ and in \cref{fig:ex_q_pnt_r2} for
$r_{\max}=20$. Clearly, $\mathrm{e}_d$ decays rapidly as $n_G$ is increased.
However, convergence ceases once the error falls below a certain threshold,
which depends on $r_{\max}$. The existence of such a threshold suggests that
certain quadtree cells fit no admissible scenario even after $r_{\max}$ steps
of subdivision. These cells are treated by low-order approximations and,
therefore, convergence with respect to $n_G$ is capped. Nevertheless,
for $r_{\max}=20$ the achievable error $\mathrm{e}_d\approx\num{1E-12}$ is
already rather close to machine epsilon. It is noteworthy that the case $d=0$,
which involves weakly singular kernel functions, is handled just as well as the
cases with $d>0$. This behavior is due to the employed transformations, which
regularize the integrand, see \cref{lem:kernel_para}. Finally, it is emphasized
that the results for $r_{\max}=20$ are quite comparable to the data provided in
our earlier work \cite[Figure 3(a)]{Poelz2019a}.

We consider a further test in order to support the capacity of the quadrature
scheme for weakly singular integral kernels. As in
\cite[Section 4.1]{Poelz2019a} a different mesh of the computational domain
$\Sigma_N$, consisting of $N=288$ panels, is employed. The examined relative
error measure is given by
\begin{equation*}
  \mathrm{e}_\Sigma\coloneqq \frac{\sum_{i=1}^N|u(\TX{x}_i)-
    \widetilde{u}(\TX{x}_i)|}{\sum_{i=1}^N|u(\TX{x}_i)|}
  ,
\end{equation*}
where $\{\TX{x}_i\}_{i=1}^N$ is the set of centroids of the panels in
$\Sigma_N$. Results of this convergence study are displayed in
\cref{fig:ex_q_avg_cube} for $r_{\max}\in\{7,14\}$. Again, we observe that
$\mathrm{e}_\Sigma$ decays swiftly as $n_G$ is increased, until it falls below
a certain magnitude, which depends on $r_{\max}$. For $r_{\max}=7$ convergence
ceases at $\mathrm{e}_\Sigma\approx\num{1E-5}$, while $r_{\max}=14$ leads to an
error threshold more than three orders of magnitude smaller. Overall, the
results depicted in \cref{fig:ex_q_avg_cube} are quite similar to the ones
displayed in \cite[Figure 3(b)]{Poelz2019a}. We conclude that the quadrature
approach laid out in \cref{sec:q_inner} is, albeit immature, indeed capable of
computing highly accurate pointwise evaluations of retarded layer potentials in
the space-time setting.

\begin{figure}[htbp]
  \centering
  \subcaptionbox{pointwise error $\mathrm{e}_d$ for $r_{\max}=10$%
    \label{fig:ex_q_pnt_r1}}%
  {\includegraphics[height=42mm]{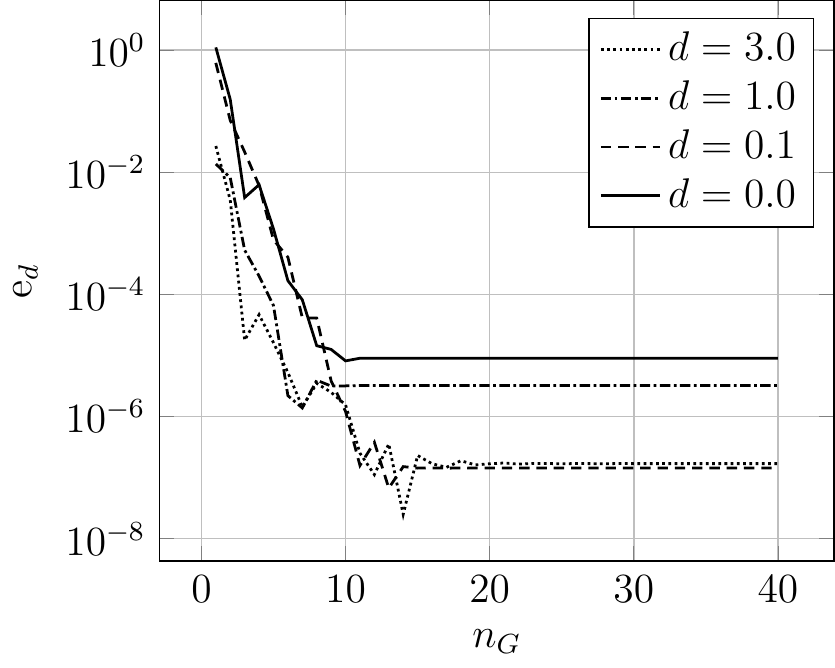}}
  \hfill
  \subcaptionbox{pointwise error $\mathrm{e}_d$ for $r_{\max}=20$%
    \label{fig:ex_q_pnt_r2}}%
  {\includegraphics[height=42mm]{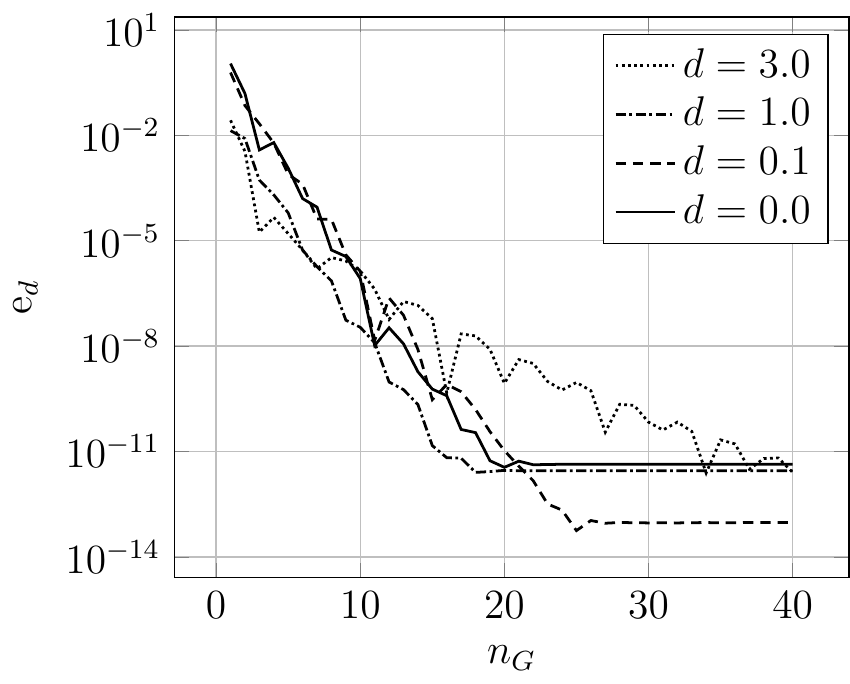}}
  \hfill
  \subcaptionbox{error $\mathrm{e}_\Sigma$ averaged over 288 points%
    \label{fig:ex_q_avg_cube}}%
  {\includegraphics[height=42mm]{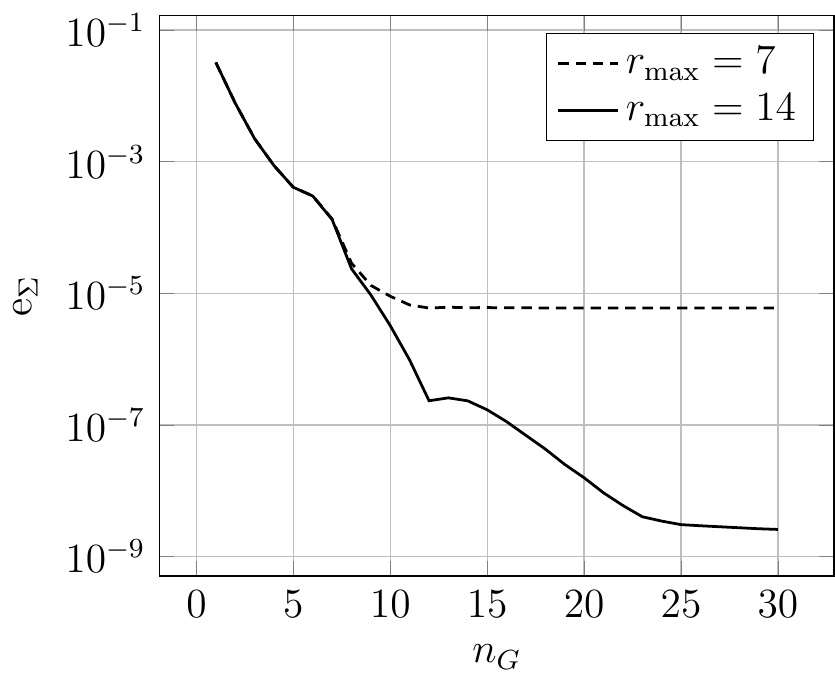}}
  \caption{Convergence study of the quadrature scheme discussed in
    \cref{sec:q_inner}; $r_{\max}$ and $n_G$ denote the quadtree depth and the
    number of quadrature points per direction, respectively. The total
    number of quadrature points behaves like $\mathcal{O}(n_G^2)$.}
  \label{fig:ex_q_pnt}
\end{figure}

\subsection{Experiment 3: space-time BEMs}\label{sec:ex_bem}

The final experiment is intended to verify the space-time BEMs discussed
in \cref{sec:disc} and to illustrate their capacity. In all following tests,
the parameters for the inner quadrature are set to
$(r_{\max},n_G)\coloneqq (7,8)$, while the outer quadrature employs
$m_Q\coloneqq 3$ points per direction, see \cref{fig:int_tri_mid}.

The first test investigates the indirect BEM \cref{eq:df_ind} and we employ the
exact solutions of $\bioSl w=g$ derived in \cite{Sauter2014} for spherical
scatterers $\Gamma=\nSphere$. Denote by $Y_n^m:\nSphere\to\CC$ the spherical
harmonic function of degree $n\in\NN_0$ and order $m\in\ZZ$ such that
$-n\leq m\leq n$ holds. Let $g:\Sigma\to\RR$ be defined by
$g:\TX{x}\mapsto g_0(t)\Re\left(Y_1^0(x)\right)$, where $\Re$ denotes the real
part and $g_0:\RR\to\RR$ reads
\begin{equation*}
  g_0:t\mapsto\begin{cases}
    t^4\exp(-2t)&\text{if}~t>0,
    \\
    0&\text{if}~t\leq 0.
  \end{cases}
\end{equation*}
In this case, the solution $w:\Sigma\to\RR$ of $\bioSl w=g$ is given by
$w:\TX{x}\mapsto w_0(t)\Re\left(Y_1^0(x)\right)$, where $w_0$ is provided
in \cite[Equation (4.18)]{Sauter2014}.
We solve \cref{eq:df_ind} for $w_h\in S_h^0(\Sigma_N)$ and evaluate the
error measures
\begin{equation}\label{eq:err_l2}
  \mathrm{e}_{\text{abs}}\coloneqq \norm{w-w_h}[\LTSig]
  ,\quad
  \mathrm{e}_{\text{BEM}}\coloneqq \frac{\norm{w-w_h}[\LTSig]}{\norm{w}[\LTSig]}
  ,\quad
  \mathrm{e}_{\text{opt}}\coloneqq\min_{z_h\in S_h^0(\Sigma_N)}
  \frac{\norm{w-z_h}[\LTSig]}{\norm{w}[\LTSig]}
  .
\end{equation}
Note that the minimum in $\mathrm{e}_{\text{opt}}$ is attained by the
$\LTSig$-orthogonal projection of $w$ onto $S_h^0(\Sigma_N)$. A convergence
study is displayed in \cref{fig:ex_slp_ind}. Both $\mathrm{e}_{\text{opt}}$ and
$\mathrm{e}_{\text{BEM}}$ exhibit first-order convergence with respect to $h$.
Therefore, the BEM approximation seems to satisfy a quasioptimality principle
in $\LTSig$, or in other words, there seems to exist a $T$-dependent constant
$C(T)>1$ such that $\mathrm{e}_{\text{BEM}}\leq C(T) \mathrm{e}_{\text{opt}}$
holds.

\begin{figure}[htbp]
  \centering
  \subcaptionbox{relative error measures of \cref{eq:err_l2}%
    \label{fig:ex_slp_ind_rel}}%
  {\includegraphics[width=70mm]{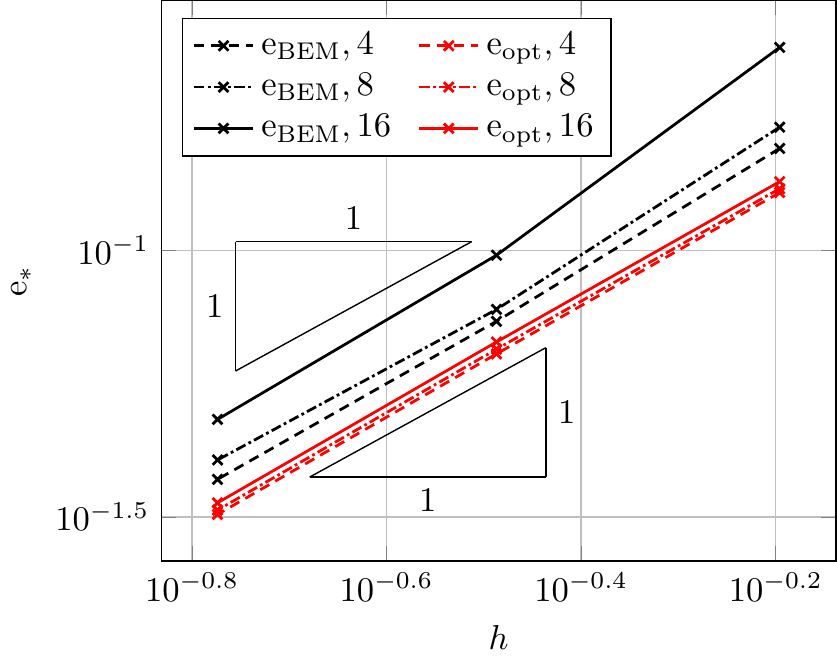}}
  \hfill
  \subcaptionbox{absolute error measure of \cref{eq:err_l2}%
    \label{fig:ex_slp_ind_abs}}%
  {\includegraphics[width=70mm]{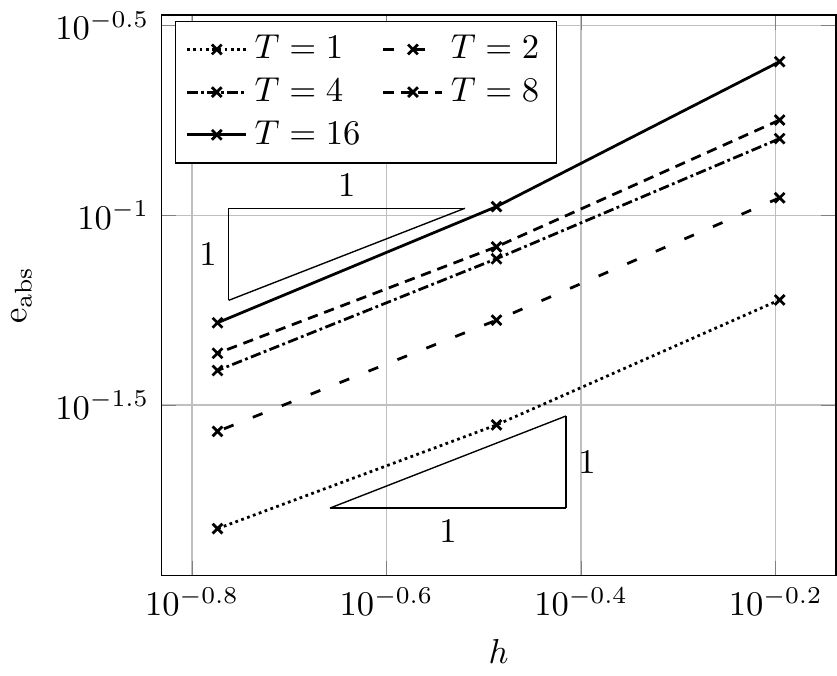}}
  \caption{Convergence study of the indirect BEM \cref{eq:df_ind} on the
    unit sphere for various simulation end times $T$. The numbers in the
    legend of \cref{fig:ex_slp_ind_rel} represent the corresponding values of
    $T\in\{4,8,16\}$.}
  \label{fig:ex_slp_ind}
\end{figure}

The final example investigates the performance of the direct BEM
\cref{eq:df_dir}. The computational domain is set to
$\Omega^-=\left(-\frac{1}{2},\frac{1}{2}\right)^3$ and the employed reference
solution is given by \cref{eq:sphere_wave}, where $\mu\in C^2(\RR)$ reads
\begin{equation*}
  \mu:t\mapsto\begin{cases}
    t^3\exp(-t)&\text{if}~t>0,
    \\
    0&\text{if}~t\leq 0.
  \end{cases}
\end{equation*}
The discretized RPBIE \cref{eq:df_dir} is solved for $w_h\approx\trNeum[+]u$
and the relative error measures of \cref{eq:err_l2} are computed. Additionally,
the error in the wave field $u-u_h$ is studied, where
$u_h\coloneqq\potDl\opQ_h^1 g-\potSl w_h$ is given by the discretized
Kirchhoff's formula. We consider $26$ evaluation points
$\TX{x}_i\coloneqq(T,x_i)$ for $i=1,\dots,26$, where each $x_i\in\RR[3]$
lies on the boundary of the cube $\left(-\frac{3}{5},\frac{3}{5}\right)^3$.
The following relative error measure is reported
\begin{equation}\label{eq:err_int_q}
  \mathrm{e}_Q\coloneqq\frac{1}{26}\sum_{i=1}^{26}
  \frac{|u(\TX{x}_i)-u_h(\TX{x}_i)|}{|u(\TX{x}_i)|}
  .
\end{equation}

\begin{figure}[htbp]
  \centering
  \subcaptionbox{$\LTSig$-norm of the error of the Neumann unknown%
    \label{fig:ex_slp_dir_neum}}%
  {\includegraphics[height=55mm]{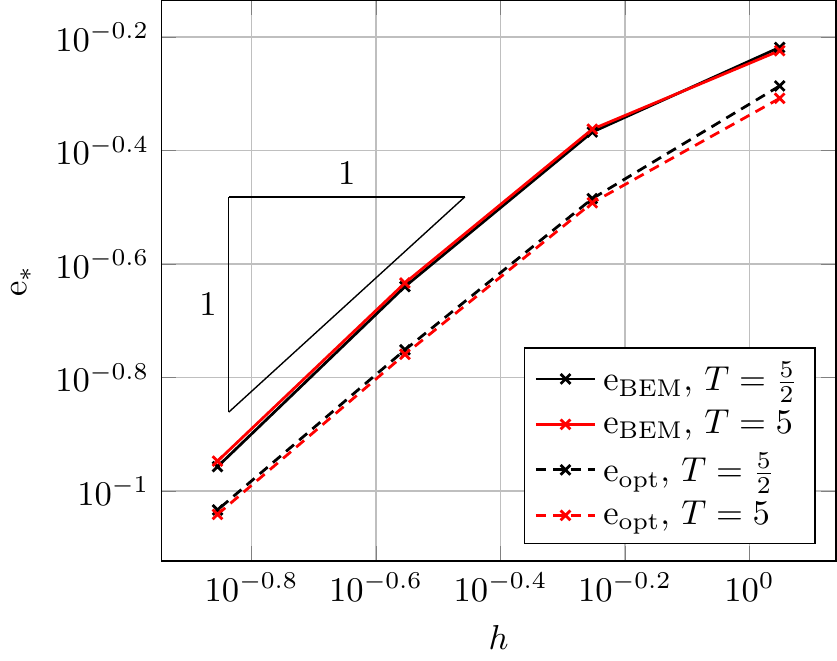}}
  \hspace{15mm}
  \subcaptionbox{error of wave field in $Q^+$ at 26 points \cref{eq:err_int_q}%
    \label{fig:ex_slp_dir_int}}%
  {\includegraphics[height=55mm]{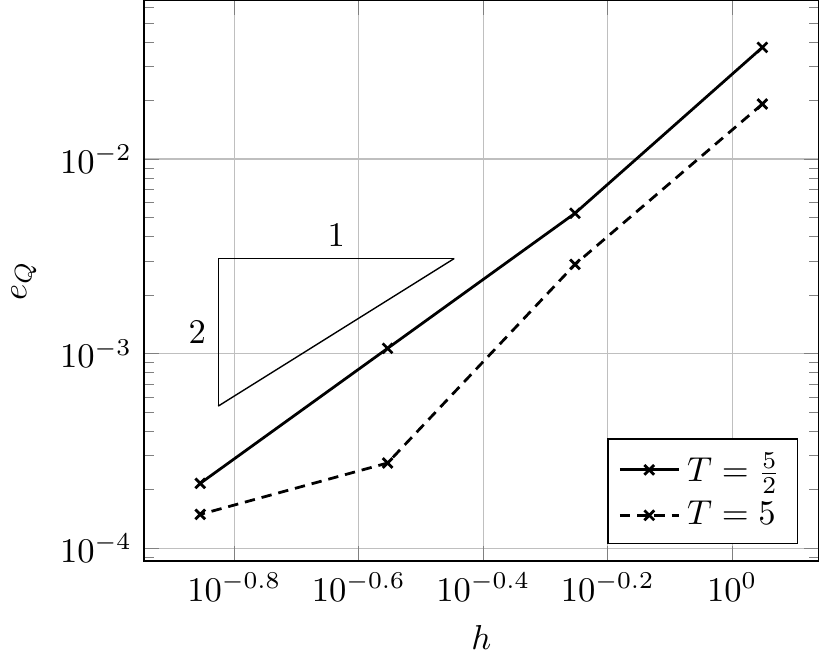}}
  \caption{Convergence study of the direct BEM \cref{eq:df_dir} on the
    unit cube for two different simulation end times $T$.}
  \label{fig:ex_slp_dir}
\end{figure}

\Cref{fig:ex_slp_dir} exhibits the results of the conducted convergence study.
Similar to the previous test, \cref{fig:ex_slp_dir_neum} displays an
$\mathcal{O}(h)$ behavior of $\mathrm{e}_{\text{BEM}}$ and
$\mathrm{e}_{\text{opt}}$. This provides further evidence that the BEM
solution seems to satisfy a quasioptimality principle. \Cref{fig:ex_slp_dir_int}
indicates that the examined mesh sizes still lie in the preasymptotic regime of
$\mathrm{e}_Q$. On average, we observe that the pointwise error in the wave
field converges quadratically with respect to $h$. If the theory of BIEs for
elliptic problems indeed carried over to hyperbolic RPBIEs, we could indeed
expect second-order convergence, see \cite[Equation (12.21)]{Steinbach2008}.

In both tests the proposed space-time Galerkin BEMs for RPBIEs yield optimal
convergence rates in the $\LTSig$-norm. The results are tremendously better
than the ones obtained by the space-time collocation BEM we developed in
\cite{Poelz2019a}. Furthermore, the provided evidence confirms
that the quadrature scheme for space-time bilinear forms discussed in
\cref{sec:q_outer} yields sufficiently accurate matrix entries such that the
overall convergence of the BEM solution is not spoiled (even for few
quadrature points $m_Q=3$).

\section{Conclusion}\label{sec:conc}

This paper presents a discretization scheme for variational integral equations
of the wave equation based on space-time boundary elements. We derive an
integral formula for retarded layer potentials which fits the space-time
setting exceptionally well. A carefully constructed parametrization of the
light cone simplifies these integrals greatly for piecewise flat boundary
meshes. This enables the application of existing quadrature schemes developed
for implicitly defined domains. Since retarded layer potentials induce
non-smooth functions, the evaluation of related space-time bilinear forms is
accomplished via a tentative low-order approach. Nevertheless, numerical
evidence suggests that the proposed methods provide sufficiently accurate
evaluations of retarded layer potentials and associated Galerkin matrix
entries. In all examined tests, the error of the Galerkin approximation
converges quasioptimally in the $\LTSig$-norm. Furthermore, the efficient
computation of the set of panels lit by the light cone is addressed. The
proposed algorithm is based on a hierarchical structure of the mesh and
facilitates computations of the set of lit panels in (nigh) optimal complexity.
Although several ideas and techniques described in this work are still in an
early stage of development, numerical experiments indicate their potential.

\appendix

\section{An example of singularities in retarded layer potentials}
\label{app:appendix}

In this example, we study the behavior of the function
$\TX{x}\mapsto\potSl w(\TX{x})$ for a simple configuration, similar to the
analysis in \cite{Stephan2008}. Let $w$ be the indicator function of the
tetrahedron $\sigma$ defined by
\begin{equation*}
  \sigma \coloneqq \operatorname{conv}\{he_1,~he_2,~he_3,~0e_4\},
\end{equation*}
where $\operatorname{conv}$ denotes the convex hull, $\{e_i\}_{i=1}^4$ is the
canonical basis of $\RR[4]$, and $h>0$. The normal vector of $\sigma$ is given
by $\nu^\top=\begin{pmatrix}0&0&0&1\end{pmatrix}$ and the signed distance
functions of the bounding half-spaces $\phi_i:\RR[4]\to\RR,i=1,\dots,4$ read
\begin{equation}\label{eq:app_ilf_face}
  \phi_1:\TX{y}\mapsto -\tau ,\quad
  \phi_2:\TX{y}\mapsto -y_2 ,\quad
  \phi_3:\TX{y}\mapsto -y_1 ,\quad
  \phi_4:\TX{y}\mapsto \left(\tau+y_1+y_2-h\right)/\sqrt{3}
  .
\end{equation}
Let $\mathcal{I}$ be as in \cref{def:int_inner} with
$k:(x,y)\mapsto\norm{x-y}^{-1}$ as in \cref{eq:kfunc}, apart from the factor
$4\pi$. The computation of the integral follows along the lines of
\cref{sec:q_inner}. Application of \cref{eq:kernel_para_full} leads to
\begin{equation}\label{eq:example_int_slp}
  \mathcal{I} = \intOp{\psi_{\TX{x}}^{-1}(\Xi(\TX{x})\cap\sigma)}%
  {\frac{r_0\rho\sin\theta}{\sqrt{\cos^2\theta+\rho^2\sin^2\theta}}}{S(\zeta)}.
\end{equation}
\Cref{def:choice_r} yields the components of the parametrization $R=I$ and
$r_0=t$. For simplicity, we choose evaluation points
$\TX{x}\in\mathcal{T}_\sigma$ only, which is equivalent to $x_3=0$.
This choice implies $\rho_0=0$ in \cref{eq:para_hypl} and we deduce that any
$\zeta\coloneqq(\rho,\varphi,\theta)\in\mathcal{P}$ with $\rho\neq 0$ satisfies
$\psi_{\TX{x}}(\zeta)\in\mathcal{T}_\sigma$ iff $\theta=\pi/2$. Insertion of
$\theta=\pi/2$ in \cref{eq:example_int_slp} yields
\begin{equation*}
  \mathcal{I} = r_0 \intOp{U}{}{(\rho,\varphi)}, \quad
  U\coloneqq\{(\rho,\varphi)\in[0,1]\times[0,2\pi):
  \phi_{\sigma}\circ\psi_{\TX{x}}\circ\ell_2(\rho,\varphi)<0\},
\end{equation*}
where $\psi_{\TX{x}}\circ\ell_2:(\rho,\varphi)\mapsto\TX{x}-r_0\rho
\begin{pmatrix}1& \cos\varphi& \sin\varphi& 0\end{pmatrix}^\top$ holds. This
leads to
\begin{align*}
  \phi_1\circ\psi_{\TX{x}}\circ\ell_2 :~&
  (\rho,\varphi)\mapsto r_0\rho-t,\\
  \phi_2\circ\psi_{\TX{x}}\circ\ell_2:~&
  (\rho,\varphi)\mapsto r_0\rho\sin\varphi-x_2,\\
  \phi_3\circ\psi_{\TX{x}}\circ\ell_2 :~&
  (\rho,\varphi)\mapsto r_0\rho\cos\varphi-x_1.
\end{align*}
We exclude $\phi_4$ because of the following considerations. From
\cref{eq:dist_faces} it follows that $U$ is the set in which all four
signed distance functions are negative
\begin{equation*}
  U\coloneqq\{(\rho,\varphi)\in[0,1]\times[0,2\pi):\phi_{i}\circ\psi_{\TX{x}}
  \circ\ell_2(\rho,\varphi)<0\quad\forall i=1,\dots,4\}.
\end{equation*}
Define the set in which the first three functions are negative
\begin{equation}\label{eq:cond_param_in_tet}
  U_0\coloneqq\{(\rho,\varphi)\in[0,1]\times[0,2\pi):
  r_0\rho<t ~~\text{and}~~ r_0\rho\sin\varphi<x_2
  ~~\text{and}~~r_0\rho\cos\varphi<x_1\}.
\end{equation}
We have $U=U_0$ iff $\phi_4\circ\psi_{\TX{x}}\circ\ell_2(\rho,\varphi)<0$ holds
for all $(\rho,\varphi)\in U_0$. From \cref{eq:app_ilf_face} it follows that
$U=U_0$ can be guaranteed by choosing $h$ sufficiently large. From here on,
assume that $U=U_0$ holds and we are left with computing
\begin{equation*}
  \mathcal{I} = r_0 \intOp{U_0}{}{(\rho,\varphi)}.
\end{equation*}
We consider two cases: the light cone approaches either a corner or an edge of
the tetrahedron $\sigma$.
\par\smallskip

\textbf{Case 1: Corner.} Let $0<\varepsilon<t$ and $\TX{x}\coloneqq
\begin{pmatrix}t& -\varepsilon\sqrt{2}/2& -\varepsilon\sqrt{2}/2& 0
\end{pmatrix}^\top$. In this case, \cref{eq:cond_param_in_tet} becomes
\begin{equation*}
  r_0\rho<t ~\text{and}~ r_0\rho\sin\varphi < -\varepsilon\sqrt{2}/2
  ~\text{and}~ r_0\rho\cos\varphi < -\varepsilon\sqrt{2}/2
  .
\end{equation*}
Since $r_0\rho\geq 0$ holds, the latter two conditions can be true only if
$\varphi\in(\pi,3\pi/2)$. This leads to the maps
\begin{equation*}
  \rho_1 : (\pi,5\pi/4]\to\RR,~ \varphi\mapsto
  -\frac{\sqrt{2}}{2}\frac{\varepsilon}{r_0}\frac{1}{\sin\varphi}
  ,\quad
  \rho_2 : (5\pi/4,3\pi/2)\to\RR,~ \varphi\mapsto
  -\frac{\sqrt{2}}{2}\frac{\varepsilon}{r_0}\frac{1}{\cos\varphi}
  ,
\end{equation*}
where the symmetry $\rho_1(5\pi/4-\varphi)=\rho_2(5\pi/4+\varphi)$ is evident.
The angle $\varphi_1\in(\pi,5\pi/4)$ such that $\rho_1(\varphi_1)=t/r_0$ holds
is given by $\varphi_1=\pi+\arcsin\left(\sqrt{2}\varepsilon/(2t)\right)$.
Exploiting the symmetry about $5\pi/4$ yields
\begin{align*}
  \mathcal{I}(t,\varepsilon) &= 2 r_0 \int_{\varphi_1}^{5\pi/4}
  \int_{-\varepsilon\sqrt{2}/(2t\sin\varphi)}^1 \mathrm{d}\rho\mathrm{d}\varphi\\ &=
  2 t \left( \frac{\pi}{4}-
    \arcsin\left(\frac{\sqrt{2}}{2}\frac{\varepsilon}{t}\right) \right) -
  \sqrt{2}\varepsilon\log\left(\sqrt{2}-1\right) +
  \sqrt{2}\varepsilon\log\left(
    \sqrt{2}\frac{1-\sqrt{1-\varepsilon^2/(4t^2)}}{\varepsilon/t}\right)
  .
\end{align*}
Its partial derivatives are
\begin{align*}
  \partial_t \mathcal{I}(t,\varepsilon) &=
  2 \left(\pi/4-\arcsin\left(\sqrt{2}\varepsilon/(2t)\right) \right)
  \\
  \partial_\varepsilon \mathcal{I}(t,\varepsilon) &=
  -\sqrt{2}\log\left(\sqrt{2}-1\right) + \sqrt{2}\log\left(
    \sqrt{2}\frac{1-\sqrt{1-\varepsilon^2/(4t^2)}}{\varepsilon/t}\right)
  .
\end{align*}
The singularity at $\varepsilon\to 0$ occurs as $\TX{x}$ approaches the
boundary of $\sigma$. The second-order derivatives are
\begin{equation*}
  \partial_t^2 \mathcal{I}(t,\varepsilon) =
  \frac{2\varepsilon}{t^2\sqrt{2-\varepsilon^2/t^2}}
  ,\quad
  \partial_{t\varepsilon} \mathcal{I}(t,\varepsilon) =
  -\frac{2}{t\sqrt{2-\varepsilon^2/t^2}}
  ,\quad
  \partial_\varepsilon^2 \mathcal{I}(t,\varepsilon) =
  \frac{2}{\varepsilon\sqrt{2-\varepsilon^2/(4t^2)}}
  .
\end{equation*}
Apart from the obvious singularity for $\varepsilon\to 0$ the behavior for
$\varepsilon\to\sqrt{2}t$ is not relevant, since $\varepsilon>t$ implies
$\mathcal{I}(t,\varepsilon)=0$. From $t\to 0$ it follows $\varepsilon\to 0$
(recall $0<\varepsilon<t$) and we conclude that singularities (up to
second-order derivatives) are confined to $\partial\sigma$.
\par\smallskip

\textbf{Case 2: Edge.} Let $0<\varepsilon<t$ and $\TX{x}\coloneqq
\begin{pmatrix}t& -\varepsilon& h/2& 0\end{pmatrix}^\top$. In this case,
\cref{eq:cond_param_in_tet} is equivalent to
\begin{equation*}
  r_0\rho<t ~\text{and}~ r_0\rho\sin\varphi < h/2
  ~\text{and}~ r_0\rho\cos\varphi < -\varepsilon
  .
\end{equation*}
The middle condition is trivially satisfied for sufficiently large $h$, while
$r_0\rho\geq 0$ implies that the latter holds only if
$\varphi\in(\pi/2,3\pi/2)$. We employ the parametrization
$\rho_3:(\pi/2,3\pi/2)\to\RR$, $\varphi\mapsto-\varepsilon/(r_0\cos\varphi)$.
The angle $\varphi_2\in(\pi/2,\pi)$ with $\rho_3(\varphi_2)=t/r_0$ is given by
$\varphi_2=\arccos\left(-\varepsilon/t\right)$. Exploiting the symmetry
$\rho_3(\pi+\varphi)=\rho_3(\pi-\varphi)$ yields
\begin{equation*}
  \mathcal{I}(t,\varepsilon) = 2 r_0 \int_{\varphi_2}^{\pi}
  \int_{-\varepsilon/(t\cos\varphi)}^1 \mathrm{d}\rho\mathrm{d}\varphi =
  2 t \left( \pi-\arccos\left(-\frac{\varepsilon}{t}\right) \right) -
  2\varepsilon\log\left(
    \frac{1+\sqrt{1-\varepsilon^2/t^2}}{\varepsilon/t}\right)
\end{equation*}
whose first-order partial derivatives are
\begin{equation*}
  \partial_t \mathcal{I}(t,\varepsilon) =
  2 \left( \pi-\arccos\left(-\frac{\varepsilon}{t}\right) \right)
  ,\quad
  \partial_\varepsilon \mathcal{I}(t,\varepsilon) =
  -2\log\left(\frac{1+\sqrt{1-\varepsilon^2/t^2}}{\varepsilon/t}\right)
  .
\end{equation*}
As in the first case, the singularity at $\varepsilon\to 0$ occurs as
$\TX{x}$ approaches $\partial\sigma$. The second-order partial derivatives,
however, reveal a more intriguing behavior
\begin{equation*}
  \partial_t^2 \mathcal{I}(t,\varepsilon) = 2\frac{\varepsilon}{t^2}
  \frac{1}{\sqrt{1-\varepsilon^2/t^2}}
  ,\quad
  \partial_{t\varepsilon} \mathcal{I}(t,\varepsilon) = -\frac{2}{t}
  \frac{1}{\sqrt{1-\varepsilon^2/t^2}}
  ,\quad
  \partial_\varepsilon^2 \mathcal{I}(t,\varepsilon) = \frac{2}{\varepsilon}
  \frac{1}{\sqrt{1-\varepsilon^2/t^2}}
  .
\end{equation*}
The singularity at $\varepsilon\to t$ does not occur in the first case. It is
indeed relevant because the light cone barely grazes the edge of $\sigma$ as
$\varepsilon\to t$. This singular behavior is not limited to $\partial\sigma$
but ``propagates'' on the line $\varepsilon=t$. For $\varepsilon=t$ it holds
$\TX{x} = \begin{pmatrix}0&0&h/2&0\end{pmatrix}^\top+
t\begin{pmatrix}1&-1&0&0\end{pmatrix}^\top$ and we are inclined to induce that
the observed singularity is related to forward light cones with apexes at the
edges of $\sigma$, cf. \cite{Stephan2008}.

\section*{Acknowledgments}
The first author is gratefully indebted to Daniel Sch\"ollhammer for his
advice and assistance in carrying out the numerical experiments of
\cref{sec:ex_bem} on adequate computers.

This research did not receive any specific grant from funding agencies in the
public, commercial, or not-for-profit sectors.

\bibliography{references}
\bibliographystyle{ieeetr}

\end{document}